\def\RR{{\mathbb R}}
\tikzset{
	subset/.style={
		draw=none,
		edge node={node [sloped, allow upside down, auto=false]{$\subset$}}},
	Subset/.style={
		draw=none,
		every to/.append style={
			edge node={node [sloped, allow upside down, auto=false]{$\subset$}}}
	}
}
\tikzset{
	labl/.style={anchor=south, rotate=90, inner sep=.50mm}
}
\newcommand{\cinf}{C^{\infty}(M)}
\newcommand{\partderf}[2]{\frac{\partial {#1}}{\partial {#2}}}
\newcommand{\del}[1]{\delta_{#1}}
\newcommand{\ricc}{\operatorname{Ric}}
\newcommand{\weyl}{\operatorname{W}}
\newcommand{\cott}{\operatorname{C}}
\newcommand{\bach}{\operatorname{B}}
\newcommand{\supp}{\operatorname{supp}}
\newcommand\restrict[1]{\raisebox{-.5ex}{$|$}_{#1}}
\newcommand{\KN}{\mathbin{\bigcirc\mspace{-15mu}\wedge\mspace{3mu}}}
\newcommand{\Lra}{\Leftrightarrow}
\newcommand{\set}[1]{{\left\{#1\right\}}}               
\newcommand{\pa}[1]{{\left(#1\right)}}                  
\newcommand{\sq}[1]{{\left[#1\right]}}                  
\newcommand{\abs}[1]{{\left|#1\right|}}                 
\newcommand{\ul}[1]{\underline{#1}}
\newcommand{\ol}[1]{\overline{#1}}
\renewcommand{\tilde}[1]{\widetilde{#1}}
\newcommand{\interior}[1]{\accentset{\circ}{#1}}
\newtheorem{theorem}{\textbf{Theorem}}[section]
\newtheorem{cor}[theorem]{\textbf{Corollary}}
\theoremstyle{remark}
\newtheorem{rem}[theorem]{\textbf{Remark}}
\newtheorem{case}{Case}
\newtheorem{ackn}{Acknowledgments\!}
\numberwithin{equation}{section}
\theparentequation\alph{equation}}
\begin{document}
	
\title{Some canonical metrics {\em via} Aubin's local deformations}
	
	
	
	\author[1]{Giovanni Catino}
\ead{giovanni.catino@polimi.it}

\author[2]{Davide Dameno}
\ead{davide.dameno@unimi.it}

\author[3]{Paolo Mastrolia\corref{cor1}}
\ead{paolo.mastrolia@unimi.it}

\cortext[cor1]{Corresponding author}

\affiliation[1]{organization={Dipartimento di Matematica, Politecnico di Milano},
addressline={Piazza Leonardo da Vinci 32},
postcode={20133},
city={Milano},
country={Italy}}

\affiliation[2]{organization={Dipartimento di Matematica, Universita' degli Studi di Milano},
addressline={via Saldini 50},
postcode={20133},
city={Milano},
country={Italy}}

\affiliation[3]{organization={Dipartimento di Matematica, Universita' degli Studi di Milano},
addressline={via Saldini 50},
postcode={20133},
city={Milano},
country={Italy}}
	
	\begin{abstract} English: In this paper, using special metric deformations introduced by Aubin, we construct Riemannian metrics satisfying non-vanishing conditions concerning the Weyl tensor, on every compact manifold. In particular, in dimension four, we show that there are no topological obstructions for the existence of metrics with non-vanishing Bach tensor. 

\vspace{1cm}
\noindent French: Dans cet article, en utilisant des d\'eformations m\'etriques sp\'eciales introduites par Aubin, nous construisons des m\'etriques Riemanniennes satisfaisant des conditions de non-annulation concernant le tenseur de Weyl, sur toute vari\'et\'e compacte. En particulier, en dimension quatre, nous montrons qu'il n'y a pas d'obstructions topologiques \`a l'existence de m\'etriques avec un tenseur de Bach non nul.
	\end{abstract}

\vspace{1cm}

\begin{keyword}
 Canonical metrics \sep Weyl tensor \sep Cotton tensor \sep Bach tensor \sep Aubin's metric deformation.
 
  AMS subject classification: 53C25, 53B21
\end{keyword}
\maketitle
	\section{Introduction} \label{introd}
	
	Let $(M,g)$ be a Riemannian manifold of dimension $n\geq 3$. It is well-known
	that its Riemann curvature tensor, $\operatorname{Riem}_g$, admits the
	decomposition
	\[
	\operatorname{Riem}_g=\weyl_g+\dfrac{1}{n-2}\ricc_g\KN g-
	\dfrac{S_g}{2(n-1)(n-2)}g\KN g,
	\]
	where $\weyl_g$, $\ricc_g$, $S_g$ are the Weyl tensor, the Ricci
	tensor and the scalar curvature of $(M,g)$, respectively, and $\KN$ denotes the
	Kulkarni-Nomizu product.
	
	If we require that the curvature of $(M,g)$ satisfies certain conditions,
	several obstructions to the validity of these properties may occur:
	indeed, the topology of $M$ may not allow
	the existence of such metrics. Famous examples of this relation between curvature
	and topology are given, for instance, by metrics with positive scalar
	curvature (\cite{gromlaw2}, \cite{gromlaw}, \cite{lebrun2}, \cite{schoenyau})
	or by locally conformally flat metrics, which, for $n\geq 4$, are the ones
	with vanishing Weyl tensor (\cite{avez}, \cite{carron}, \cite{gursky},
	\cite{kuiper}).
	
	On the contrary, there are curvature conditions which can be realized
	on every Riemannian manifold (and we say that they are ``non-obstructed''): for instance, Aubin (\cite{aubin70}) showed that,
	if $M$ is closed
	and $n\geq 3$, there always
	exists a Riemannian metric $g$ such that $S_g\equiv -1$;
	he also proved that, if $M$ is compact and $n\geq 4$,
	there always exists a Riemannian metric $g$ such that the Weyl tensor
	$\weyl_g$ nowhere vanishes
	(\cite{aubin66}, \cite{aubin70}).
	The
	first author generalized these results showing that, given a Riemannian
	manifold $(M,g)$, for every $t\in\RR$,
	there exists a Riemannian metric $\tilde{g}$ such that
	the \emph{scalar-Weyl curvature} $S_g+t\abs{\weyl_g}_g\equiv-1$
	on $M$ (\cite{catino}); on the other hand, the first and the third authors,
	together with D. D. Monticelli and F. Punzo, used Aubin's result concerning the Weyl tensor to show the existence
	of \emph{weak harmonic-Weyl} metrics on every closed Riemannian four-manifold
	(\cite{catmasweyl}). More precisely, these metrics arise as minimizers of the functional
	$$
	g\longmapsto\mathfrak{D}(g):=\operatorname{Vol}_g(M)^{\frac{1}{2}} \int_M |\delta_{g} W_g|_g^2\, dV_g \,
	$$
	in the conformal class with non-vanishing Weyl tensor constructed by Aubin.

	Our main task in this paper is to investigate other curvature conditions
	which can be imposed without any topological obstruction: in particular,
	we focus on some properties involving geometric tensors related to $\weyl_g$ on compact manifolds of dimension $n\geq 4$.
	
	First, for the sake of completeness, we provide a detailed proof of Aubin's
	result (see Theorem \ref{t-aub}).
	Then, we focus on the case $n=4$: it is well-known that, on an
	oriented four-dimensional Riemannian manifold $(M,g)$, the Hodge operator $\star$ induces a
	splitting of the bundle
	of $2$-forms into two subbundles
	$\Lambda=\Lambda_+\oplus\Lambda_-,$ where $\Lambda_{\pm}$ is the eigenspace of $\star$ corresponding to
	the eigenvalue $\pm 1$. This leads to a decomposition of the Weyl tensor
	into a \emph{self-dual} and an \emph{anti-self-dual} part; namely,
	\[
	\operatorname{W_g}=\weyl_g^++\weyl_g^-.
	\]
	Exploiting Aubin's deformation method, we are able to prove the following
	
	\begin{theorem} \label{t-sd}
		Let $M$ be a compact smooth manifold, with
		$\operatorname{dim}M=4$. Then, there exists a Riemannian metric $\bar{g}$ such that
		\[
		|\weyl_{\bar{g}}^{+}|_{\bar{g}}^2\equiv 1\quad\text{on } M.
		\]
		The same
		result holds for the anti-self-dual component
		$\weyl_{\bar{g}}^-$.
	\end{theorem}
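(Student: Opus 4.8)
\medskip
\noindent\emph{Proof idea.} The plan is to produce first a Riemannian metric $g$ on $M$ with $\weyl^+_g$ nowhere vanishing, and then to rescale $g$ conformally. We assume $M$ oriented, so that $\weyl^+_g$ is globally defined; the claim for $\weyl^-$ follows by the identical argument with the orientation reversed (equivalently, replacing the $+1$-eigenbundle of $\star$ by the $-1$-eigenbundle).

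To get non-vanishing I would re-run Aubin's local deformation argument from the proof of Theorem \ref{t-aub}, tracking the self-dual part. Regard $\weyl^+_g$ as a section of the rank-$5$ real bundle $\mathcal{W}^+\to M$ of algebraic self-dual Weyl tensors. The new ingredient is an algebraic refinement of Aubin's local lemma: for any $p\in M$, any neighbourhood $U\ni p$ and any $\omega\in\mathcal{W}^+_p$, there is a symmetric $2$-tensor $h$ supported in $U$, vanishing to second order at $p$, whose linearised Weyl tensor has self-dual part equal to $\omega$ at $p$. Indeed, in $g$-normal coordinates at $p$ one takes $h$ to be a cut-off times the quadratic $x\mapsto-\tfrac{1}{3} T_{ikjl}x^kx^l$ (exactly Aubin's deformation); since $h$ vanishes to second order at $p$, both the $g$-dependence of the splitting $\Lambda^2=\Lambda^+\oplus\Lambda^-$ and the curvature-times-$h$ terms disappear from the linearisation at $p$, leaving precisely the self-dual Weyl component of $T$, and the natural projection from algebraic curvature tensors at $p$ onto $\mathcal{W}^+_p$ is surjective, so some $T$ yields $\omega$.

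Granting this, cover $M$ by finitely many coordinate balls; on each ball $B_j$ choose, via the refinement, perturbations $h_{j,1},\dots,h_{j,5}$ supported in $B_j$ whose linearised self-dual Weyl tensors at the centre of $B_j$ form a basis of the fibre. Shrinking the balls (and taking the initial cover fine enough that they still cover $M$) these five sections span $\mathcal{W}^+_x$ for every $x\in B_j$. For $a=(a_{j,k})$ in a small ball, $g_a:=g+\sum_{j,k}a_{j,k}h_{j,k}$ is a Riemannian metric and the partial differential in $a$ of $(a,x)\mapsto\weyl^+_{g_a}(x)$ is, at every point, surjective onto the fibres of $\mathcal{W}^+$; hence this map is transverse to the zero section of $\mathcal{W}^+$. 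By the parametric transversality theorem, for almost every such $a$ the section $\weyl^+_{g_a}$ is transverse to the zero section; since $\dim M=4<5=\operatorname{rk}\mathcal{W}^+$, transversality forces $\weyl^+_{g_a}$ to avoid the zero section entirely. Fixing such an $a$ and renaming it $g$, the function $f:=|\weyl^+_g|_g$ is smooth and strictly positive on the compact manifold $M$.

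It remains to set $\bar g:=f\,g$. Writing $\bar g=e^{2\varphi}g$ and using that the Hodge star on $2$-forms in dimension $4$ is conformally invariant, the decomposition $\Lambda^2=\Lambda^+\oplus\Lambda^-$ is unchanged and $\weyl^+_{\bar g}=e^{2\varphi}\weyl^+_g$ as $(0,4)$-tensors, so $|\weyl^+_{\bar g}|^2_{\bar g}=e^{-8\varphi}e^{4\varphi}|\weyl^+_g|^2_g=f^{-2}|\weyl^+_g|^2_g\equiv 1$ on $M$. I expect the only genuine difficulty to be the algebraic refinement of Aubin's local lemma in the second paragraph — checking that the compactly supported deformations can push the self-dual part of the curvature in any prescribed direction at a point; the transversality step and the conformal normalisation are then routine.
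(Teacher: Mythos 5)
Your proof is correct, but it takes a genuinely different route from the paper's. The paper stays inside Aubin's rank-one deformation $\tilde g=g+d\phi\otimes d\phi$ with an explicit bump function $\phi$, computes the components $\tilde W^+_{ijij}$ and $\tilde W^+_{ijik}$ in closed form for the specific choice $\alpha=(1,\tfrac{5}{4},\tfrac{3}{2},2)$, and runs a three-case analysis ruling out a common zero on each small ball, finishing with the conformal rescaling of Remark~\ref{weyl1}. You instead open up the deformation space to arbitrary compactly supported symmetric $2$-tensors, prescribe the linearized self-dual Weyl tensor at a point via the quadratic $h=-\tfrac{1}{3}T_{ikjl}x^kx^l$, and invoke parametric transversality together with the dimension count $\dim M=4<5=\operatorname{rank}\mathcal{W}^+$. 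The paper's proof is elementary and explicit and is the template reused for the Cotton and Bach tensors in later sections, whereas your soft argument is shorter, avoids the delicate algebra of the coefficients $a_{ij},b_{ij},a_{ijk}$, and generalizes immediately to any curvature quantity that surjects, under variation of the $4$-jet of $g$, onto a bundle of rank exceeding $\dim M$ (in particular it would also reprove Theorem~\ref{t-aub}). Two small points need tightening: the parenthetical ``(exactly Aubin's deformation)'' is inaccurate --- $d\phi\otimes d\phi$ has rank one and a very constrained $2$-jet at $p_0$, and does not realize every algebraic curvature tensor $T$, so you genuinely need the general quadratic $h$, as you in fact use; and the ``shrinking the balls'' step should be organized the other way around: for each $p$ fix five deformations whose linearized $W^+$'s span the fibre at $p$, use continuity to get an open $V_p\ni p$ on which they still span, then cover $M$ by finitely many such $V_p$, and identify the moving bundle $\mathcal{W}^+_{g_a}$ with $\mathcal{W}^+_{g}$ for small $a$ (e.g.\ by orthogonal projection) so that the universal section over $P\times M$ and its transversality to the zero section are well posed. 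With those adjustments the argument stands.
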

	
	As a consequence, using the metric $g_0$ constructed in Theorem \ref{t-sd} and
	following the same strategy as in \cite{catmasweyl}, it is immediate to prove the
	\begin{cor}
		On every smooth, closed four-manifold $M$, there exists a Riemannian metric $g_0$
		such that, in its conformal class $[g_0]$, there exist
		{\em weak half harmonic Weyl metrics}, i.e. minimizers of the
		quadratic curvature functional
		$$
		g\longmapsto\mathfrak{D^{\pm}}(g):=\operatorname{Vol}_g(M)^{\frac{1}{2}} \int_M |\delta_{g} W_g^{\pm}|_g^2 \, dV_g .
		$$
	\end{cor}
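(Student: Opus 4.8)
The plan is to transplant the variational argument of \cite{catmasweyl}, which produces weak harmonic Weyl metrics as minimizers of $\mathfrak{D}$ in Aubin's conformal class, from the full Weyl tensor to its self-dual (resp. anti-self-dual) part, using the metric furnished by Theorem \ref{t-sd} as the base point of the conformal class. The key structural facts making this adaptation possible are that, in dimension four, the Hodge star acting on $2$-forms is a pointwise conformal invariant, so the splitting $\Lambda^2 = \Lambda^2_+\oplus\Lambda^2_-$, and hence the decomposition $\weyl_g = \weyl_g^+ + \weyl_g^-$ of the Weyl tensor regarded as a symmetric endomorphism of $\Lambda^2$, depend only on the conformal class $[g]$; that the scalar density $\abs{\weyl_g^{\pm}}_g^2\,dV_g$ is therefore conformally invariant, being the $\pm$-summand of the conformally invariant density $\abs{\weyl_g}_g^2\,dV_g$; and that, exactly as for $\delta_g\weyl_g$, the divergence $\delta_g\weyl_g^{\pm}$ obeys a first-order conformal transformation law: writing $g = e^{2f}g_0$ one has, schematically, $\delta_g\weyl_g^{\pm} = \delta_{g_0}\weyl_{g_0}^{\pm} + \weyl_{g_0}^{\pm}(\nabla f,\,\cdot\,,\,\cdot\,,\,\cdot\,)$, up to the usual conformal weights. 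This last formula is the only information about $\weyl^{\pm}$ needed by the argument of \cite{catmasweyl}.

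Now let $g_0$ be the metric provided by Theorem \ref{t-sd}, so that $\abs{\weyl_{g_0}^{+}}_{g_0}^2 \equiv 1$ on $M$; in particular $\weyl_{g_0}^{+}$ is nowhere vanishing, with a \emph{uniform} positive lower bound for its norm. Restrict the scale-invariant functional $\mathfrak{D}^{+}$ to the conformal class $[g_0]$ and pick a minimizing sequence $g_k = e^{2f_k}g_0$, normalized, using scale invariance, so that $\operatorname{Vol}_{g_k}(M) = 1$. Combining the conformal transformation law above with the pointwise identity $\abs{\weyl_{g_0}^{+}}_{g_0} \equiv 1$, one derives a coercivity estimate bounding a suitable Sobolev norm of $f_k$ by $\mathfrak{D}^{+}(g_k)$ together with the volume normalization: the non-degeneracy of $\weyl_{g_0}^{+}$ is precisely what allows one to control the zeroth-order term $\weyl_{g_0}^{+}(\nabla f_k,\cdots)$ and thereby to bound $\nabla f_k$ in $L^2$. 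Passing to a weakly convergent subsequence $f_k \rightharpoonup f_\infty$ in $W^{1,2}$ and using lower semicontinuity of $\mathfrak{D}^{+}$ under this convergence — the Weyl factor of the integrand converging strongly exactly because its norm is pinched away from zero — one concludes that $g_\infty = e^{2f_\infty}g_0$ realizes the infimum of $\mathfrak{D}^{+}$ over $[g_0]$, i.e. is a weak half harmonic Weyl metric. The statement for $\weyl^{-}$ follows identically, taking as base point the metric furnished by the second assertion of Theorem \ref{t-sd}.

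The main obstacle is, as usual for such geometric variational problems, the compactness of minimizing sequences: a priori the conformal factors $f_k$ could concentrate or diverge on part of $M$, and $\mathfrak{D}^{\pm}$, being a second-order quantity, does not on its own rule this out. The resolution is the one already exploited in \cite{catmasweyl}, and it is here that Theorem \ref{t-sd} enters essentially: because the base metric has $\weyl^{\pm}$ of \emph{constant}, hence uniformly positive, norm, the zeroth-order coupling in the conformal formula for $\delta_g\weyl_g^{\pm}$ is genuinely coercive in $\nabla f$, which upgrades a minimizing sequence to a bounded sequence in $W^{1,2}$ and closes the argument. It remains to verify the routine points that $\mathfrak{D}^{\pm}$ is well-defined, finite (as $M$ is closed) and scale-invariant on all of $[g_0]$, and that the half-Weyl decomposition is compatible with $\delta_g$ in the sense needed above; these follow at once from the conformal weights and the invariance of $\Lambda^2_{\pm}$ recorded in the first paragraph.
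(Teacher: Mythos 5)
Your proposal takes essentially the same route as the paper: the paper's proof is literally the one-line observation that one should take the metric $g_0$ of Theorem \ref{t-sd}, whose self-dual (resp.\ anti-self-dual) Weyl tensor is nowhere zero, and rerun the existence argument of \cite{catmasweyl} for the functional $\mathfrak{D}^{\pm}$ on $[g_0]$, which is exactly what you do. Your expansion of the \cite{catmasweyl} strategy (conformal invariance of the $\pm$-splitting and of $|\weyl^{\pm}|^2\,dV$, the first-order conformal transformation of $\delta\weyl^{\pm}$, coercivity from the uniform lower bound on $|\weyl_{g_0}^{\pm}|$, weak lower semicontinuity) is a reasonable reconstruction of details the paper chooses not to reproduce, and it correctly identifies Theorem \ref{t-sd} as the ingredient that makes the minimization nondegenerate.
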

	\noindent
	(see also Remark $4$ in \cite{catmasweyl}).
	
	Moreover, we generalize this statement, showing a "mixed-type" condition:
	
	\begin{theorem} \label{t-mixed}
		Let $(M,g)$ be a compact Riemannian manifold, with
		$\operatorname{dim}M=4$. Then, for every $t\in\RR$, there exists
		a Riemannian metric $\bar{g}_t$ such that
		\[
		|\weyl_{\tilde{g}_t}^+ +
		t\weyl_{\tilde{g}_t}^-|^2\equiv1\quad\text{on } M.
		\]
	\end{theorem}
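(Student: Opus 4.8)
The plan is to reduce Theorem \ref{t-mixed} to Theorem \ref{t-sd} by a purely conformal argument, exploiting the pointwise orthogonality of the self-dual and anti-self-dual parts of the Weyl tensor in dimension four. As is implicit in the statement, we take $M$ oriented, so that the decomposition $\weyl_g=\weyl_g^++\weyl_g^-$ is defined; this is exactly the assertion that $\weyl_g$, viewed as a symmetric endomorphism of the bundle $\Lambda$ of two-forms, is block-diagonal with respect to $\Lambda=\Lambda_+\oplus\Lambda_-$, its two blocks being $\weyl_g^+$ and $\weyl_g^-$. Since $\Lambda_+\perp\Lambda_-$, the $(0,4)$-tensors $\weyl_g^+$ and $\weyl_g^-$ are pointwise orthogonal, so the cross term $\langle\weyl_g^+,\weyl_g^-\rangle_g$ vanishes and
\[
|\weyl_g^++t\,\weyl_g^-|_g^2=|\weyl_g^+|_g^2+t^2\,|\weyl_g^-|_g^2\qquad\text{for every }t\in\RR .
\]
Furthermore, the Hodge star on two-forms is conformally invariant, hence so is the splitting $\Lambda=\Lambda_+\oplus\Lambda_-$, and therefore the algebraic projections $\weyl_g\mapsto\weyl_g^{\pm}$ commute with the conformal rescaling $\weyl_g\mapsto e^{2f}\weyl_g$ of the $(0,4)$ Weyl tensor; thus $\weyl_{\tilde g}^{\pm}=e^{2f}\weyl_g^{\pm}$ for $\tilde g=e^{2f}g$, and consequently
\[
|\weyl_{\tilde g}^++t\,\weyl_{\tilde g}^-|_{\tilde g}^2=e^{-4f}\,|\weyl_g^++t\,\weyl_g^-|_g^2 .
\]

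To carry this out, I would first invoke Theorem \ref{t-sd} to fix a metric $g_0$ on $M$ with $|\weyl_{g_0}^+|_{g_0}^2\equiv 1$. By the first displayed identity, the function
\[
\varphi:=|\weyl_{g_0}^++t\,\weyl_{g_0}^-|_{g_0}^2=1+t^2\,|\weyl_{g_0}^-|_{g_0}^2
\]
is smooth and satisfies $\varphi\geq 1$ on $M$; in particular $f:=\tfrac14\log\varphi$ is a well-defined smooth function. Setting $\bar g_t:=e^{2f}g_0=\varphi^{1/2}g_0$, the second displayed identity gives
\[
|\weyl_{\bar g_t}^++t\,\weyl_{\bar g_t}^-|_{\bar g_t}^2=e^{-4f}\,|\weyl_{g_0}^++t\,\weyl_{g_0}^-|_{g_0}^2=\varphi^{-1}\varphi\equiv 1\qquad\text{on }M,
\]
which is the assertion. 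When $t\neq0$ one may alternatively start directly from Aubin's Theorem \ref{t-aub}: on the metric it provides, $|\weyl_g^++t\,\weyl_g^-|_g^2=|\weyl_g^+|_g^2+t^2|\weyl_g^-|_g^2\geq\min\{1,t^2\}\,|\weyl_g|_g^2>0$, and the same conformal normalization concludes; only the case $t=0$ genuinely needs the self-dual refinement of Theorem \ref{t-sd}.

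I do not expect a serious obstacle here: essentially all of the geometric content — deforming an arbitrary metric, by means of Aubin's local construction, so as to make a half of the Weyl tensor non-vanishing at every point — has already been spent in Theorem \ref{t-sd}, and what remains is the elementary observation that, because $\weyl^+$ and $\weyl^-$ are orthogonal, normalizing $|\weyl^+|^2$ to $1$ forces $|\weyl^++t\weyl^-|^2\geq 1$, after which a single conformal factor rescales it identically to $1$. The two points that do require care are precisely the two displayed identities: the vanishing of the cross term $\langle\weyl_g^+,\weyl_g^-\rangle_g$, equivalent to the block-diagonal structure of $\weyl_g$ on $\Lambda_+\oplus\Lambda_-$; and the conformal weight $e^{-4f}$ of the pointwise norm $|\,\cdot\,|_g^2$ applied to $\weyl_g^{\pm}$, which is what makes the conformal change $g_0\mapsto\bar g_t$ legitimately divide $\varphi$ out and produce a genuine smooth Riemannian metric with the desired property.
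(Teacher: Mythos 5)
Your argument is correct, and it is a genuinely different — and substantially shorter — route than the one in the paper. The paper re-runs the Aubin-style local deformation from scratch: it writes out the analogue of the system \eqref{weylremainder} for the combination $\tilde W^+ + t\tilde W^-$, splits off the special cases $t=0,\pm1$, and then repeats the three-case analysis ($p=p_0$, $p\in B_{r/2}\setminus\{p_0\}$, $p\in B_r\setminus B_{r/2}$) for generic $t$, in parallel with the proofs of Theorems \ref{t-aub} and \ref{t-sd}. You instead observe that once Theorem \ref{t-sd} is available, Theorem \ref{t-mixed} follows from two purely algebraic facts peculiar to dimension four: the pointwise orthogonality $\langle \weyl^+,\weyl^-\rangle_g=0$ (so that $|\weyl^++t\weyl^-|_g^2=|\weyl^+|_g^2+t^2|\weyl^-|_g^2$), and the conformal invariance of $\star$ on $\Lambda^2$ (so that the $\pm$-splitting, and hence the nonvanishing of $|\weyl^++t\weyl^-|^2$, is a conformally invariant condition). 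Normalizing by the conformal factor $\varphi^{1/2}=|\weyl_{g_0}^++t\weyl_{g_0}^-|_{g_0}^{1/2}$, which is smooth and everywhere $\geq 1$ because $|\weyl_{g_0}^+|_{g_0}^2\equiv 1$, immediately yields $\bar g_t$. The conformal weights you use are the right ones ($\weyl^\pm_{\tilde g}=e^{2f}\weyl^\pm_g$ as $(0,4)$-tensors, hence $|\weyl^\pm_{\tilde g}|_{\tilde g}^2=e^{-4f}|\weyl^\pm_g|_g^2$), and, as you note, for $t\neq 0$ one can even bypass Theorem \ref{t-sd} and start from Theorem \ref{t-aub} via the lower bound $|\weyl^++t\weyl^-|^2\geq\min\{1,t^2\}\,|\weyl|^2$. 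What your argument buys is a clean reduction that makes the $t$-dependence trivial; what the paper's direct computation buys is an explicit description of a deformed metric (and, incidentally, an illustration of how the system of equations behaves for general $t$), at the cost of redoing a case analysis whose content is essentially already spent in Theorems \ref{t-aub} and \ref{t-sd}.
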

	
	In the subsequent sections, we focus on two other relevant geometric tensors:
	the \emph{Cotton tensor} and the \emph{Bach tensor}, which we  denote
	as $\cott_g$ and $\bach_g$, respectively (see
	Subsection \ref{prelim} for the definitions and the main properties of
	these tensors).
	
	First, we obtain a "non-obstructed" condition for $\cott_g$
	on a compact Riemannian manifold of dimension $n\geq 4$:
	
	\begin{theorem}\label{t-cot}
		Let $M$ be a compact smooth manifold of dimension $n\geq 4$.
		Then, there exists
		a metric $\tilde{g}$ such that the Cotton tensor
		$\cott_{\tilde{g}}$ of
		$(M,\tilde{g})$ vanishes only at finitely many points
		$p_1,...,p_k\in M$.
	\end{theorem}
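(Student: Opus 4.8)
The plan is to follow Aubin's local deformation philosophy, as already used in Theorems \ref{t-sd} and \ref{t-mixed}, but now applied to the Cotton tensor. The starting point is that the Cotton tensor $\cott_g$ is the obstruction to local conformal flatness in dimension $n=3$ and, for $n\geq 4$, it satisfies $\cott_g = \tfrac{n-3}{n-2}\,\diver\weyl_g$ (up to sign and normalization conventions). Since $\cott_g$ is a third-order invariant of the metric, it is \emph{not} conformally invariant (for $n\ge4$), so a pure conformal argument will not suffice; instead I would localize. First I would fix any point $p\in M$ and work in a small geodesic ball $B$ around $p$, and show that one can find a metric $g$ on $M$ whose Cotton tensor is nonzero at $p$ — indeed nonzero on $B\setminus\{p\}$ after a suitable bump-type perturbation of the flat (or background) metric. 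The model computation is to take $g = g_{\mathrm{flat}} + \eta$ where $\eta$ is a compactly supported symmetric $2$-tensor, and compute $\cott_g$ to first order in $\eta$: this is a constant-coefficient third-order linear operator $L(\eta)$ applied to $\eta$, and one checks that $L$ is not identically zero, so a generic $\eta$ (e.g.\ built from a single well-chosen component times a cutoff) produces $\cott_g \not\equiv 0$ on $B$, with zeros only on a lower-dimensional set, which after a further small perturbation can be arranged to be finite (or even empty) inside a slightly smaller ball.

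The second ingredient is a gluing/patching argument to pass from ``nonzero near one point'' to ``nonzero away from finitely many points of $M$''. Here I would argue as follows: cover $M$ by finitely many charts $U_1,\dots,U_N$; on each $U_i$ perform the local deformation above so that $\cott$ is nonzero on $U_i$ minus a controlled small set; then observe that the set of metrics on $M$ for which $\cott_g$ has only finitely many zeros is, in an appropriate sense, generic — the vanishing locus of a nontrivial real-analytic (or transversally cut-out) section $\cott_g$ of a vector bundle of rank $\geq 2$ is generically of codimension $\ge 2$, hence after a generic perturbation it is at most $0$-dimensional; compactness of $M$ then forces it to be finite. Concretely, I expect the authors (and I) to phrase this via a transversality statement: perturb $g$ within a large enough finite-dimensional family of deformations so that the map $x\mapsto \cott_g(x)\in \Lambda^2\otimes T^*M$ becomes transverse to the zero section, and since $\cott_g$ takes values in a bundle of rank $\ge n > 1$, transversality to zero means the zero set is a submanifold of dimension $n - \mathrm{rk} \le 0$, i.e.\ discrete, hence finite by compactness.

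The technical heart — and the main obstacle — is the explicit first-order computation of $\cott_g$ under Aubin's deformation and verifying its nontriviality with good control on the zero set. Aubin's deformations are not generic smooth perturbations: they are of a rigid, highly structured form (conformal factors, or metrics adapted to normal coordinates with prescribed curvature behavior at the center), and one must check that within this restricted class the induced variation of the Cotton tensor is still nonzero and has the right vanishing behavior. Because $\cott_g$ involves three derivatives of $g$, the computation is genuinely more delicate than the Weyl-tensor computations underlying Theorem \ref{t-sd}: one needs to track the third-order Taylor coefficients of $g$ in normal coordinates at $p$, which correspond to covariant derivatives of curvature. I would therefore expect the proof to proceed by (i) writing $g$ in geodesic normal coordinates centered at the prospective zero $p_i$, (ii) choosing the deformation so that $\nabla\weyl$ (equivalently $\cott$) has a prescribed nonzero leading-order expansion away from $p_i$ while still being small, and (iii) invoking a Sard/transversality or real-analyticity argument to conclude the zero set is finite. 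The place where I anticipate having to be careful is ensuring the zeros are \emph{isolated}: it is easy to make $\cott_g$ nonzero somewhere, but pinning the zero locus down to finitely many points requires either the analyticity of the construction or an explicit perturbation argument showing the differential of $\cott_g$ along $M$ is surjective at each of its zeros.
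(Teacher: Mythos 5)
Your proposal diverges substantially from the paper's actual argument, and it contains a genuine gap. The paper does \emph{not} invoke any transversality/Sard/genericity mechanism. It works exclusively with Aubin's rigid deformation $\tilde g = g + d\phi\otimes d\phi$, where $\phi = \tfrac{\lambda r^2}{2}\,f\bigl(\tfrac{\alpha_1 x_1^2+\dots+\alpha_n x_n^2}{r^2}\bigr)$ in normal coordinates, and then computes the transformed Cotton tensor explicitly. The leading terms turn out to be $\tilde C_{iji}\approx \lambda^2\{a_{ij}f'f''+b_{ij}[f'f'''+(f'')^2]\}\,x_j$ and $\tilde C_{ijk}\approx \lambda^2 a_{ijk}\,x_ix_jx_k\,[(f'')^2+f'f''']$, which are polynomial in the normal coordinates. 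The proof that $\tilde C\neq 0$ on $B_r\setminus\{p_0\}$ is then an elementary (but lengthy) analysis of this overdetermined polynomial system, with a suitable choice of pairwise-distinct $\alpha_i$ and of $\lambda$ outside a finite exceptional set, exactly parallel to the Weyl-tensor case. Finally, compactness covers $M$ by finitely many such balls.

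The concrete gap in your plan is that you conflate two incompatible frameworks. On one hand you say you want to stay inside Aubin's deformation class; on the other hand, your central mechanism (``generic $\eta$'', ``transversality to the zero section'', ``generic perturbation makes the zero locus $0$-dimensional'') requires freedom to perturb $g$ in \emph{arbitrary} directions of symmetric $2$-tensors, which the one-parameter family $g+d\phi\otimes d\phi$ does not provide. You flag this yourself (``Aubin's deformations are not generic smooth perturbations'') but do not resolve it. More importantly, you miss the structural reason the theorem asserts vanishing at \emph{finitely many points} rather than nowhere: as the explicit formulas above show, the deformed Cotton tensor is forced to vanish at each ball center $p_0$, because all leading terms are proportional to $x_j$ or $x_i x_j x_k$. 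The finite exceptional set $\{p_1,\dots,p_k\}$ in the statement is exactly the set of centers of the covering balls, not an output of a dimension count via Sard's theorem. A transversality argument in the full space of metrics would in fact give the stronger conclusion that $\cott$ never vanishes (the relevant bundle has rank much larger than $n$), which the paper is at pains to point out it does \emph{not} obtain with Aubin's method (see the Remark following the proof). So your proposal, taken literally, proves either nothing (because the transversality step is unjustified within the restricted class) or the wrong statement (a stronger one, by an argument not in the paper). To repair it, you would need to carry out the explicit third-order computation you defer and analyze the resulting polynomial system, which is precisely what the paper does.
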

	
	\begin{rem} \label{cottonsharp}
		We point out that Aubin's method in the proof of Theorem \ref{t-cot}
		does not lead to a sharp conclusion: indeed, one can prove the existence
		of left-invariant, non-Einstein metrics on the standard sphere whose
		Cotton tensor nowhere vanishes for every $n\geq 3$.
		Moreover, if $n=3$, the method used in the proof does not work, due to the lack of
		independent equations in the case $p\in B_{r/2}\setminus\{p_0\}$.
	\end{rem}
	
	The final section of the paper is dedicated to the tensor $\bach_g$,
	which has many applications, for instance, in General Relativity (\cite{bach}). This tensor is especially relevant when $n=4$: indeed, in this case
	$\bach_g$ is also divergence-free and conformally covariant, i.e.,
	given a conformal change $\tilde{g}=e^{2u}g$ of $g$, the Bach tensor transforms
	as
	\[
	e^{4u}\tilde{B}_{ij}=B_{ij},
	\]
	which, in global notation, means
	\[
	e^{2u}\bach_{\tilde{g}}=\bach_g
	\]
	
	When $\bach_g\equiv 0$, we say that $(M,g)$ is
	\emph{Bach-flat}: these metrics are critical
	points of the \emph{Weyl functional}
	\[
	g\longmapsto\mathcal{W}(g):=\int_M\abs{\weyl_g}_g^2dV_g,
	\]
	which is a conformally invariant functional,
	playing an important role in the study of Einstein four-manifolds:
	indeed, Bach-flatness is a necessary condition for a metric $g$ to be
	\emph{conformally Einstein} (i.e., there exists a metric $\tilde{g}$ in
	the conformal class $[g]$ such that $(M,\tilde{g})$ is an Einstein manifold).
	We point out that, in general,
	this condition is not sufficient (see \cite{salamon}):
	however, Derdzi\'{n}ski \cite{derd} showed that Bach-flatness is a sufficient
	condition for positive definite K\"{a}hler four-manifolds and recently LeBrun
	(\cite{lebrun3}) classified Bach-flat compact K\"{a}hler complex surfaces.
	
	Although the existence of topological obstructions for Bach-flat metrics
	on Riemannian four-manifolds is an open problem, in this paper we provide an answer to the "opposite" question, i.e. if the topology of the manifold plays a role in the existence of metrics with nowhere vanishing
	Bach tensor. More precisely, we exploit Aubin's construction in the four-dimensional case to obtain the following:
	
	\begin{theorem} \label{t-bac}
		Let $M$ be a compact smooth manifold with $\operatorname{dim}M=4$.
		Then, there exists a Riemannian metric $\bar{g}$ such that
		\[
		|\bach_{\bar{g}}|_{\bar{g}}^2\equiv 1\quad\text{on } M.
		\]
	\end{theorem}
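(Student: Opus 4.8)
The plan is to reduce the statement, via the conformal covariance of the Bach tensor in dimension four, to the construction of a single metric whose Bach tensor vanishes nowhere, and then to build such a metric by Aubin's local deformation technique. For the reduction, recall that in dimension four $e^{2u}\bach_{\tilde g}=\bach_g$ whenever $\tilde g=e^{2u}g$, so that, taking squared norms with respect to the corresponding metrics,
\[
|\bach_{\tilde g}|_{\tilde g}^2=e^{-8u}\,|\bach_g|_g^2 .
\]
Hence, once we produce a metric $g$ on $M$ with $|\bach_g|_g^2>0$ everywhere, the function $u:=\tfrac{1}{8}\log|\bach_g|_g^2\in\cinf$ is well defined and smooth, and $\bar g:=e^{2u}g=\bigl(|\bach_g|_g^2\bigr)^{1/4}g$ is a smooth Riemannian metric with $|\bach_{\bar g}|_{\bar g}^2\equiv1$ on $M$. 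Thus the whole problem reduces to constructing a metric with nowhere vanishing Bach tensor.

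To this end we use Aubin's local deformations. Fix a metric $g$ and a geodesic ball $B=B_\delta(q)$ contained in a coordinate neighbourhood, and work in $g$-normal coordinates on $B$. The value of $\bach_{\tilde g}$ at a point $q'\in B$ is a universal expression in $\tilde g^{-1}$ and the partial derivatives of $\tilde g$ up to order four at $q'$; since $\bach$ is a fixed contraction of $\nabla^2\weyl$ plus lower-order curvature terms, it depends \emph{linearly} on the fourth-order derivatives of the metric (the fourth-order analogue of the fact that $\weyl$ is linear in the second-order derivatives). Moreover $\bach_{\tilde g}$ is a section of the bundle $E$ of trace-free symmetric $2$-tensors over $M$, whose rank is $9>4=\dim M$. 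Introduce a finite-dimensional family $\tilde g_c:=g+\eta_c$, $c\in\RR^N$, of local deformations, each $\eta_c$ supported in $B$, depending linearly on $c$ and built from cut-off polynomials in the normal coordinates so that at every point of $\overline{B}$ varying $c$ varies the fourth-order jet of the metric in sufficiently many independent directions. The Bach analogue of Aubin's computation then gives that, for each $q'\in\overline{B}$, the map $c\mapsto\bach_{\tilde g_c}(q')$ is a submersion onto $E_{q'}$; by parametric transversality together with the Sard-type count $N+4-9<N$, a generic $c$ — one for which $g+\eta_c$ is still positive definite — yields a metric $\tilde g_c$ with $\bach_{\tilde g_c}$ nowhere zero on $\overline{B}$ and equal to $g$ off $B$.

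Globalization is then immediate. Starting from an arbitrary metric $g_0$ (for instance the one provided by Theorem \ref{t-sd}), the set $K:=\{\,\bach_{g_0}=0\,\}$ is closed, hence compact; cover it by finitely many balls $B_1,\dots,B_m$ of the above type. Deform successively: at stage $i$ replace the current metric by $\tilde g_c$ for a generic $c$, so that the Bach tensor becomes nowhere zero on $\overline{B_i}$ while the metric — and therefore the Bach tensor and its non-vanishing there — is unchanged off $B_i$. Then the open set on which the Bach tensor does not vanish grows by at least $B_i$ at each stage, so after stage $m$ it contains $\{\,\bach_{g_0}\neq0\,\}\cup B_1\cup\dots\cup B_m=M$. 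The reduction of the first paragraph now furnishes $\bar g$, and the theorem follows. (When $M$ is not compact, one uses a locally finite cover of $K$ and an exhaustion argument; the conformal rescaling itself requires no compactness.)

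The main obstacle is precisely the Bach analogue of Aubin's computation invoked above: one must verify that the fourth-order jet freedom provided by compactly supported deformations genuinely moves $\bach_{\tilde g_c}(q')$ in enough independent directions — more than $\dim M=4$ suffice, so surjectivity onto $E_{q'}$ is more than enough — i.e. that the precise algebraic form of the Bach polynomial does not make this map drop rank at some points of the ball, and that this holds uniformly in $q'$ over the compact set $\overline{B}$. This is where Aubin's argument for the Weyl tensor (Theorem \ref{t-aub}) has to be lifted from a second-order to a fourth-order curvature quantity, and it is the technical heart of the proof; the functional-analytic packaging (a suitable $C^k$ or Sobolev class, so that Sard--Smale applies) and the non-compact case are routine by comparison.
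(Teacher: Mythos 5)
Your conformal reduction in the first paragraph is correct, and the plan to cover the zero locus of the Bach tensor by finitely many balls and deform on each is the same global strategy as the paper's. However, the step you yourself call ``the technical heart of the proof'' --- that varying $c$ makes $c\mapsto\bach_{\tilde g_c}(q')$ a submersion onto the rank-$9$ fibre $E_{q'}$ uniformly over $q'\in\overline{B}$ --- is not proved, is essentially the entire content of the theorem, and is in fact a considerably \emph{stronger} claim than is needed or than the paper establishes. The paper does not prove any jet-surjectivity; it fixes the specific one-parameter Aubin deformation $\tilde g=g+d\phi\otimes d\phi$ with $\phi$ the bump \eqref{phiweyl}, computes the transformed Bach tensor explicitly through third- and fourth-order derivatives of $\phi$ (formulas \eqref{apprbach4}, \eqref{bachsyst1}, \eqref{bachsyst2}), and then shows, by a three-case analysis (forced non-vanishing at the centre $p_0$; an inconsistent explicit polynomial system on $B_{r/2}\setminus\{p_0\}$; a finite exclusion of values of the free parameter $\lambda$ on the annulus $B_r\setminus B_{r/2}$), that $\bach_{\tilde g}$ never vanishes on $B_r$. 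Mere non-vanishing of one carefully engineered perturbation is the tractable claim; no assertion is made, or needed, that the perturbed Bach tensor can be steered onto all of $E_{q'}$.

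A secondary problem: your submersion claim is false as written at the boundary of the ball. Since $\eta_c$ is compactly supported in $B$, its $4$-jet vanishes identically on $\partial B$, so $c\mapsto\bach_{\tilde g_c}(q')$ is constant (equal to $\bach_g(q')$) for $q'\in\partial B$ and cannot be a submersion there; you would have to establish the property only on a compact subset of the open ball and arrange the cover accordingly. The appeal to Sard--Smale is also out of place, since the family is finite-dimensional --- ordinary Sard together with the count $N+4-9<N$ is what is actually available --- but even that presupposes the unverified transversality. In short, the outline is a plausible alternative strategy, but its only substantive step is left open, and it is precisely the step the paper's explicit computation is designed to replace.
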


	\begin{ackn}
	The authors would like to thank Professor A. Derdzi\'{n}ski for
	the useful observations appearing in Remark \ref{cottonsharp}.
	All authors are members of the	Gruppo Nazionale per le Strutture Algebriche, Geometriche e loro Applicazioni
	(GNSAGA) of INdAM (Istituto Nazionale di Alta Matematica) and have been partially supported by {\em 2022 PRIN Project: Differential-geometric aspects of manifolds via Global Analysis (code 20225J97H5)}.
	\end{ackn}
	
	\

	\section{Aubin's deformation}
	
	\subsection{Preliminaries} \label{prelim}
	The $(1, 3)$-Riemann curvature tensor of a smooth  Riemannian manifold $(M^n,g)$ is defined by
	$$
	\mathrm{R}(X,Y)Z=\nabla_{X}\nabla_{Y}Z-\nabla_{Y}\nabla_{X}Z-\nabla_{[X,Y]}Z\,.
	$$
	Throughout the article, the Einstein convention of summing over the repeated indices will be adopted. In a local coordinate system the components of the $(1, 3)$-Riemann
	curvature tensor are given by
	$R^{l}_{ijk}\tfrac{\partial}{\partial
		x^{l}}=\mathrm{R}\big(\tfrac{\partial}{\partial
		x^{j}},\tfrac{\partial}{\partial
		x^{k}}\big)\tfrac{\partial}{\partial x^{i}}$ and we denote  by
	$\operatorname{Riem}_g$ its $(0,4)$ version with components by
	$R_{ijkl}=g_{im}R^{m}_{jkl}$. The Ricci tensor is obtained by the contraction
	$R_{ik}=g^{jl}R_{ijkl}$ and $S=g^{ik}R_{ik}$ will
	denote the scalar curvature ($g^{ij}$ are the coefficient of the inverse of the metric $g$). As recalled in the Introduction, the {\em Weyl tensor} $\weyl_g$ is
	defined by the  decomposition formula, in dimension $n\geq 3$,
	\begin{eqnarray}
		\label{Weyl}
		W_{ijkl}  & = & R_{ijkl} \, - \, \frac{1}{n-2} \, (R_{ik}g_{jl}-R_{il}g_{jk}
		+R_{jl}g_{ik}-R_{jk}g_{il})  \nonumber \\
		&&\,+\frac{S}{(n-1)(n-2)} \,
		(g_{ik}g_{jl}-g_{il}g_{jk})\, \, .
	\end{eqnarray}
	The Weyl tensor shares the algebraic symmetries of the curvature
	tensor. Moreover, as it can be easily seen by the formula above, all of its contractions with the metric are zero, i.e. $\weyl$ is totally trace-free. In dimension three, $W$ is identically zero on every Riemannian manifold, whereas, when $n\geq 4$, the vanishing of the Weyl tensor is
	a relevant condition, since it is  equivalent to the local
	conformal flatness of $(M^n,g)$. We also recall that in dimension $n=3$,  local conformal
	flatness is equivalent to the vanishing of the {\em Cotton tensor} $\cott_g$, whose local components are
	\begin{equation}\label{def_cot}
		C_{ijk} =  R_{ij,k} - R_{ik,j}  -
		\frac{1}{2(n-1)}  \big( S_k  g_{ij} -  S_j
		g_{ik} \big)=
		A_{ij,k}-A_{ik,j}\,;
	\end{equation}
	here $R_{ij,k}=\nabla_k R_{ij}$ and $S_k=\nabla_k S$ denote, respectively, the components of the covariant derivative of the Ricci tensor and of the differential of the scalar curvature, and $A_{ij,k}$ denote the components
	of the covariant derivative of the \emph{Schouten tensor}
	\[
	\operatorname{A}_g=\ricc_g-\dfrac{S_g}{2(n-1)}g;
	\]
	hence, the Cotton tensor represents the obstruction for $\operatorname{A}_g$
	to be a Codazzi tensor (i.e., $(\nabla_X\operatorname{A})Y=
	(\nabla_Y\operatorname{A})X$ for every pair of vector fields $X,Y$).
	By direct computation, we can see that $\cott_g$
	satisfies the  symmetries
	\begin{equation}\label{CottonSym}
		C_{ijk}=-C_{ikj},\,\quad\quad C_{ijk}+C_{jki}+C_{kij}=0\,,
	\end{equation}
	moreover it is totally trace-free,
	\begin{equation}\label{CottonTraces}
		g^{ij}C_{ijk}=g^{ik}C_{ijk}=g^{jk}C_{ijk}=0\,,
	\end{equation}
	by its skew--symmetry and Schur lemma. We also recall that, for $n\geq 4$,  the Cotton tensor can  be defined as one of the possible divergences of the Weyl tensor:
	\begin{equation}\label{def_Cotton_comp_Weyl}
		C_{ijk}=\pa{\frac{n-2}{n-3}}W_{tikj, t}=-\pa{\frac{n-2}{n-3}}W_{tijk, t}=-\frac{n-2}{n-3} (\delta W)_{ijk} \,.
	\end{equation}
	A computation shows that the two definitions coincide (see e.g. \cite{cmbook}).	
	
	\
	
	The
	\emph{Bach tensor} $\bach_g$ of $(M,g)$ is defined,
	in components, as
	\begin{equation} \label{bach}
		B_{ij}:=\dfrac{1}{n-2}\pa{g^{ks}C_{jik,s}+g^{ks}g^{lt}R_{kl}W_{isjt}}.
	\end{equation}
	It is immediate to show that $\bach_g$ is a traceless tensor; moreover,
	since $(n-3)W_{jkil,lk}=(n-2)C_{ijk,k}$, exploiting the second covariant
	derivative commutation formulas, it can be shown that $\bach_g$
	is symmetric (see, for instance, \cite[Lemma 2.8]{cmbook}). Also, recall that,
	if $n=4$, the Bach tensor acquires two additional features: it is
	divergence-free	and conformally covariant.
	
	\subsection{Aubin's local deformations} Let us introduce
	the following deformation of the metric $g$:
	\begin{equation} \label{aubindef}
		\tilde{g}=g+d\phi\otimes d\phi,		
	\end{equation}
	where $\phi\in C^{\infty}(M)$.
	We denote the Weyl tensor of $(M,\tilde{g})$ as
	$\weyl_{\tilde{g}}$. If $U$ is a local chart of $M$
	and $x_1,...,x_n$ are local coordinates on $U$, the local components
	of the $(0,4)$-version of $\weyl_{\tilde{g}}$,  $\tilde{W}_{ijkt}$, are given by the following expression
	(see also \cite{cmbook}, Chapter 2):
	\begin{align} \label{weylaubin}
		\tilde{W}_{ijkt}&=W_{ijkt}+\dfrac{1}{w}(\phi_{ik}\phi_{jt}-
		\phi_{it}\phi_{jk})+\\
		&+\dfrac{1}{n-2}(R_{ik}\phi_j\phi_t-R_{it}\phi_j\phi_k+
		R_{jt}\phi_i\phi_k-R_{jk}\phi_i\phi_t) \notag\\
		&+\dfrac{S}{(n-1)(n-2)}(g_{ik}\phi_j\phi_t-g_{it}\phi_j\phi_k+
		g_{jt}\phi_i\phi_k-g_{jk}\phi_i\phi_t)+\notag\\
		&+\dfrac{\phi^p\phi^q}{w(n-2)}[
		R_{ipkq}(g_{jt}+\phi_j\phi_t)-R_{iptq}(g_{jk}+\phi_j\phi_k)+
		R_{jptq}(g_{ik}+\phi_i\phi_k)-R_{jpkq}(g_{it}-\phi_i\phi_t)]+
		\notag\\
		&-\dfrac{2R_{pq}\phi^p\phi^q}{w(n-1)(n-2)}
		[g_{ik}g_{jt}-g_{it}g_{jk}+g_{ik}\phi_j\phi_t-g_{it}\phi_j\phi_k+
		g_{jt}\phi_i\phi_k-g_{jk}\phi_i\phi_t]+\notag\\
		&-\dfrac{1}{w(n-2)}\{[(\Delta\phi)\phi_{ik}-\phi_{ip}\phi_k^p]
		(g_{jt}+\phi_j\phi_t)-[(\Delta\phi)\phi_{it}-\phi_{ip}\phi_t^p]
		(g_{jk}+\phi_j\phi_k)\}+\notag\\
		&-\dfrac{1}{w(n-2)}\{[(\Delta\phi)\phi_{jt}-\phi_{jp}\phi_t^p]
		(g_{ik}+\phi_i\phi_k)-[(\Delta\phi)\phi_{jk}-\phi_{jp}\phi_k^p]
		(g_{it}+\phi_i\phi_t)\}+\notag\\
		&+\dfrac{1}{w(n-1)(n-2)}\sq{(\Delta\phi)^2-
			\abs{\operatorname{Hess}(\phi)}^2}	
		[g_{ik}g_{jt}-g_{it}g_{jk}+g_{ik}\phi_j\phi_t-g_{it}\phi_j\phi_k+
		g_{jt}\phi_i\phi_k-g_{jk}\phi_i\phi_t]+\notag\\
		&+\dfrac{\phi^p\phi^q}{w^2(n-2)}[(\phi_{ik}\phi_{pq}-\phi_{ip}
		\phi_{kq})(g_{jt}+\phi_j\phi_t)-(\phi_{it}\phi_{pq}-\phi_{ip}
		\phi_{tq})(g_{jk}+\phi_j\phi_k)]+\notag\\
		&+\dfrac{\phi^p\phi^q}{w^2(n-2)}[(\phi_{jt}\phi_{pq}-\phi_{jp}
		\phi_{tq})(g_{ik}+\phi_i\phi_k)-(\phi_{jk}\phi_{pq}-\phi_{jp}
		\phi_{kq})(g_{it}+\phi_i\phi_t)]+\notag\\
		&-\dfrac{2}{w^2(n-1)(n-2)}[(\Delta\phi)\phi^p\phi^q\phi_{pq}-
		\phi^p\phi_{pq}\phi^{qr}\phi_r](g_{ik}g_{jt}-g_{it}g_{jk})+\notag\\
		&-\dfrac{2}{w^2(n-1)(n-2)}[(\Delta\phi)\phi^p\phi^q\phi_{pq}-
		\phi^p\phi_{pq}\phi^{qr}\phi_r]
		(g_{ik}\phi_j\phi_t-g_{it}\phi_j\phi_k+
		g_{jt}\phi_i\phi_k-g_{jk}\phi_i\phi_t)\notag,
	\end{align}
	where $w=1+\abs{\nabla\phi}^2$ and
	\begin{align*}
		\phi_i&=\partial_i\phi=\partderf{\phi}{x_i},\\
		\phi^i&=g^{ip}\phi_p,\\
		\phi_{ij}&=\partial_i\partial_j\phi-\Gamma_{ij}^p\phi_p,\\
		\phi_j^i&=g^{ip}\phi_{pj}=\partial_j\phi^i+\phi^p\Gamma_{pj}^i,\\
		\phi^{ij}&=g^{ip}\phi_p^j.
	\end{align*}
	
	\
	
	\section{A detailed proof of Aubin's result}
	
	In this section we give a complete proof of Aubin's result
	(see \cite{aubin66} and
	\cite{aubin70}), i.e. we prove the following
	\begin{theorem}[Aubin (\cite{aubin66}, \cite{aubin70})] \label{t-aub}
		On every smooth manifold of dimension at least $4$ there exists
		a Riemannian metric $g$ whose Weyl tensor nowhere identically vanishes.
	\end{theorem}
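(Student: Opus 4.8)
The plan is to leave a fixed background Riemannian metric $g$ on $M$ unchanged outside a small coordinate ball and to perturb it inside by an Aubin deformation $\tilde{g}=g+d\phi\otimes d\phi$, reading off the Weyl tensor of $\tilde{g}$ from the explicit formula \eqref{weylaubin}. Since $g+d\phi\otimes d\phi\ge g>0$ is a Riemannian metric for every $\phi\in C^{\infty}(M)$, there is nothing to check about positive definiteness, and it suffices to make $\weyl_{\tilde{g}}$ nonzero at a single point of $M$. Concretely I would fix $p\in M$ together with $g$-normal coordinates $x^{1},\dots,x^{n}$ centered at $p$, choose a cut-off $\eta$ with $\eta\equiv1$ in a neighborhood of $p$ and support in a small ball, and set $\phi:=\eta\cdot q$ with $q(x)=\tfrac{\lambda}{2}H_{ij}x^{i}x^{j}$, where $H=(H_{ij})$ is a constant symmetric matrix and $\lambda>0$ a parameter, both to be fixed later. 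Near $p$ one then has $\phi(p)=0$, $d\phi(p)=0$ and, since the Christoffel symbols of $g$ vanish at $p$, $\phi_{ij}(p)=\lambda H_{ij}$; in particular $w(p)=1+\abs{\nabla\phi}^{2}(p)=1$.

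The first real step is to evaluate \eqref{weylaubin} at $p$. Because $d\phi(p)=0$, every term of \eqref{weylaubin} carrying an undifferentiated first derivative $\phi_i$ or $\phi^{p}$ vanishes there, and a direct bookkeeping of the surviving terms should give
\[
\weyl_{\tilde{g}}(p)-\weyl_{g}(p)=\lambda^{2}\left[\tfrac12\,H\KN H-\tfrac{1}{n-2}\,(hH-H^{2})\KN g+\tfrac{h^{2}-\abs{H}^{2}}{2(n-1)(n-2)}\,g\KN g\right],
\]
where $h=\operatorname{tr}H$ and all traces are taken with respect to $g(p)=\mathrm{Id}$. Comparing with the curvature decomposition \eqref{Weyl}, the bracket is precisely the Weyl part $T^{W}$ of the algebraic curvature-type tensor $T=\tfrac12\,H\KN H$, whose Ricci contraction is $hH-H^{2}$ and whose scalar is $h^{2}-\abs{H}^{2}$. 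Thus $\weyl_{\tilde{g}}(p)=\weyl_{g}(p)+\lambda^{2}\,T^{W}$, and the problem is reduced to choosing $H$ so that $T^{W}\neq0$.

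This last point is the only genuinely nontrivial one, and it is exactly where $n\ge4$ is used: it amounts to saying that the quadratic map $H\mapsto H\KN H$ does not land in the ``Ricci-plus-scalar'' part of the space of algebraic curvature tensors. I would settle it by the explicit choice $H=\operatorname{diag}(1,1,0,\dots,0)$, for which $H^{2}=H$, $\operatorname{Ric}^{T}=H$, $\operatorname{scal}^{T}=2$, and a one-line computation gives the component $T^{W}_{3434}=\tfrac{2}{(n-1)(n-2)}\neq0$ — the indices $3,4$ being available precisely because $n\ge4$ (recall that $\weyl$ is identically zero when $n=3$). With such an $H$ fixed, the equation $\weyl_{g}(p)+\lambda^{2}T^{W}=0$ can hold for at most one value of $\lambda^{2}>0$, so choosing any other $\lambda>0$ produces a Riemannian metric $\tilde{g}=g+d\phi\otimes d\phi$ with $\weyl_{\tilde{g}}(p)\neq0$, hence with Weyl tensor not identically vanishing on $M$, as required.

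I expect the main obstacle to be organizational rather than analytic: the bookkeeping in \eqref{weylaubin} — deciding which of its many terms survive at a critical point of $\phi$ and recognizing their sum as a Weyl projection — is the bulk of the work, whereas the dimensional input reduces to the elementary linear-algebra fact above. I would also remark that, by performing this local perturbation near a suitable finite (or countable dense) collection of points — essentially Aubin's original argument — one can upgrade ``not identically vanishing'' to ``vanishing nowhere'', which is the stronger form used in Theorems \ref{t-sd}, \ref{t-mixed} and \ref{t-bac}.
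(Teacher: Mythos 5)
Your proposal is aimed at a strictly weaker statement than what Theorem~\ref{t-aub} actually asserts and what the rest of the paper uses. Despite the slippery phrase ``never identically vanishes,'' the intended meaning (and the content of Aubin's original result) is that the Weyl tensor of $\tilde g$ is nonzero \emph{as a tensor at every point of $M$}: this is what makes Remark~\ref{weyl1} possible (the conformal factor $|\weyl_{\tilde g}|$ must be everywhere positive to be admissible) and is the hypothesis actually used in the proofs of Theorems~\ref{t-sd}, \ref{t-mixed}, \ref{t-bac}. Your argument only shows that $\weyl_{\tilde g}$ is nonzero at the single point $p$, which is close to trivial — and the closing remark, that one can ``upgrade'' to vanishing nowhere by repeating the trick near a finite or countable dense set of points, is where the real gap sits. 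Making the Weyl tensor nonzero at one point does nothing to control its zero set elsewhere: two perturbations supported on overlapping balls can recreate a zero that neither had separately, and a dense set of points where $\weyl\neq0$ is perfectly compatible with $\weyl$ vanishing on its complement.

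The hard part of Aubin's argument — and essentially the whole of the paper's proof — is precisely the verification that $\weyl_{\tilde g}$ is nonzero at \emph{every} point of the deformation ball $B_r$, not just at its center. Your computation at $p$ is correct and corresponds to Case~1 of the paper, and your recognition of the bracket as the Weyl projection of $T=\tfrac12 H\KN H$ is a nice packaging of that step. But once you move off the center, $d\phi\neq0$, $w\neq1$, and a great many more terms of \eqref{weylaubin} survive; a generic cutoff-times-quadratic $\phi=\eta q$ gives you no control over which of them cancel, and near the outer edge of the cutoff the perturbation tends to zero so $\weyl_{\tilde g}\to\weyl_g$, which may well vanish there. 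That is why the paper does not use an arbitrary cutoff: it takes the very specific Aubin profile $\phi=\tfrac{\lambda r^2}{2}f\bigl(\sum\alpha_i x_i^2/r^2\bigr)$ with $f'>0$, $f''<0$ on $[0,1)$ and tunable parameters $(\lambda,\alpha_1,\dots,\alpha_n)$, works in normal coordinates, writes the leading part of $\tilde W_{ijkl}$ explicitly in terms of $f',f''$ and the $\alpha_i$'s, and shows in Cases~2 and~3 that the resulting system $\tilde W_{ijij}=\tilde W_{ijik}=0$ is overdetermined (of size $n(n-2)>n$) and, for a good choice of pairwise distinct $\alpha_i$ (and, in Case~3, a generic $\lambda$), has only the trivial solution. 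None of this is ``organizational bookkeeping'': it is the substance of the theorem, and your proposal as written does not contain it.
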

	
	\begin{proof}
		We divide the proof in two steps.
		
		\vspace{0.3cm}
		
		\begin{center}
			\large
			\textbf{Step 1: the local deformation}
		\end{center}
		
		\vspace{0.3cm}
		
		Let $g$ any Riemannian metric on $M$ and consider the metric $\tilde{g}$ given
		by \eqref{aubindef}. Let $p_0\in M$ be such that
		$\weyl_g$ vanishes at $p_0$ and $B_r$ an open ball of
		radius $r$ and centered in $p_0$. Moreover, let us
		consider normal coordinates $x_1,...,x_n$ on $B_r$ such that
		$p_0=(0,...,0)$.
		Thus, at $p_0$ we have
		\[
		g_{ij}=g^{ij}=\delta_{ij}, \quad
		\phi_i=\phi^i, \quad \phi_{ij}=\partial_i\partial_j\phi=\phi_j^i=
		\phi^{ij}
		\]
		From now on, we denote the local components of
		$\weyl_g$ ($\weyl_{\tilde{g}}$, resp.) on $B_r$ as
		$W_{ijkl}$ ($\tilde{W}_{ijkl}$, resp.).
		
		We construct the function $\phi$ as follows: let
		$f\in C^{\infty}([0,+\infty))$ such that
		\[
		\begin{cases}
			f(y)=0, \mbox{ if } y\geq 1\\
			f'(y)>0, f''(y)<0, \mbox{ if } 0\leq y< 1
		\end{cases}.
		\]
		For instance, we may choose
		\begin{equation} \label{rightfunction}
			f(x):=
			\begin{cases}
				-e^{\pa{\frac{b}{1-x}}} &\mbox{ if } 0\leq x<1\\
				0 &\mbox{ if } x\geq 1
			\end{cases},
		\end{equation}
		where $b>0$ is sufficiently large.
		Now, let $\lambda, \alpha_1,...,\alpha_n$ be $n+1$ real numbers in the
		interval $[1,2]$ and let
		\begin{equation} \label{phiweyl}
			\phi=\dfrac{\lambda r^2}{2}f
			\pa{\dfrac{\alpha_1x_1^2+...+\alpha_nx_n^2}{r^2}}.
		\end{equation}
		By definition, $\phi\in C^{\infty}(B_r)$ and
		\[
		B_{\frac{r}{2}}\subset\operatorname{supp}\phi\subset B_r.
		\]
		Indeed, if $x_1,...,x_n$ are such that
		$\alpha_1x_1^2+...+\alpha_nx_n^2<r^2$, then, since $\alpha_i\geq 1$
		for every $i$,
		\[
		\sum_{i=1}^n x_i^2\leq\sum_{i=1}^n\alpha_ix_i^2 < r^2,
		\]
		i.e. $p=(x_1,...,x_n)\in B_r$;
		on the other hand, if $p\in B_{\frac{r}{2}}$, then, since
		$\alpha_i\leq 2$ for every $i$,
		\[
		\sum_{i=1}^n\alpha_ix_i^2\leq 2\sum_{i=1}^nx_i^2 < \dfrac{r^2}{2},
		\]
		thus $\alpha_1x_1^2+...+\alpha_nx_n^2<r^2$ and
		$p\in\operatorname{supp}\phi$.
		
		The partial derivatives of $\phi$ satisfy
		\begin{equation} \label{derphiweyl}
			\phi_i=\lambda f'\cdot \alpha_ix_i=
			O(r),
		\end{equation}
		as $r\to 0$. From now on, every $O(\cdot)$ will be regarded
		as $r\to 0$. Since we chose a system of
		normal coordinates, for small radii the second partial derivatives
		of $\phi$ satisfy
		\begin{equation} \label{2derphiweyl}
			\phi_{ij}=\lambda\pa{\alpha_if'\del{ij}+
				2\dfrac{\alpha_i\alpha_j}{r^2}x_ix_jf''}=O(1).
		\end{equation}
		
		Now, let us consider equation \eqref{weylaubin}:
		we can rewrite the expression as
		\begin{align} \label{apprweyl}
			\tilde{W}_{ijkl}&=W_{ijkl}+\phi_{ik}\phi_{jl}
			-\phi_{il}\phi_{jk}+\\
			&-\dfrac{1}{n-2}\Delta\phi(\phi_{ik}\del{jl}-\phi_{il}\del{jk}+
			\phi_{jl}\del{ik}-\phi_{jk}\del{il})+\notag\\
			&+\dfrac{1}{n-2}(\phi_{ip}\phi_{pk}\del{jl}-\phi_{ip}
			\phi_{pl}\del{jk}+
			\phi_{jp}\phi_{pl}\del{ik}-\phi_{jp}\phi_{pk}\del{il})\notag\\
			&+\dfrac{1}{(n-1)(n-2)}\sq{(\Delta\phi)^2-
				\abs{\operatorname{Hess}(\phi)}^2}(\del{ik}\del{jl}-
			\del{il}\del{jk})+
			O(r^2).\notag
		\end{align}
		Thus, we informally
		distinguish a ``principal part'' and a ``remainder'' in the
		expression of the components $\tilde{W}_{ijkl}$.
		We define
		\begin{equation} \label{support}
		S:=\interior{\supp\phi}=
		\left\{p=(x_1,...,x_n)\in B_r: \sum_{i=1}^n
		\alpha_ix_i^2<r^2\right\};
		\end{equation}
		the key of
		the proof is to show that the principal parts of the components
		$\tilde{W}_{ijkl}$ cannot be simultaneously zero on $S$.
		
		Now, let
		$i\neq j\neq k\neq l$; inserting \eqref{derphiweyl} and
		\eqref{2derphiweyl} into \eqref{apprweyl}, we obtain
		\begin{align} \label{weylremainder}
			\tilde{W}_{ijij}&=W_{ijij}+\lambda^2\sq{a_{ij}(f')^2+
				b_{ij}f'f''}+O(r^2);\\
			\tilde{W}_{ijik}&=W_{ijik}+\lambda^2a_{ijk}f'f''x_jx_k
			+O(r^2);\notag\\
			\tilde{W}_{ijkl}&=W_{ijkl}+O(r^2),\notag
		\end{align}
		where
		\begin{align} \label{coeffweyl}
			a_{ij}&=\dfrac{1}{n-2}\sq{(n-4)\alpha_i\alpha_j-
				(\alpha_i+\alpha_j)\sum_{k\neq i,j}\alpha_k+
				\dfrac{2}{n-1}\sum_{k<l}\alpha_k\alpha_l};\\
			b_{ij}&=\dfrac{2}{(n-2)r^2}\left[
			(n-4)(\alpha_ix_i^2+\alpha_jx_j^2)\alpha_i\alpha_j-
			(\alpha_i^2x_i^2+\alpha_j^2x_j^2)\sum_{k\neq i,j}\alpha_k+
			\right.\notag\\
			&\left.-(\alpha_i+\alpha_j)\sum_{k\neq i,j}\alpha_k^2x_k^2
			+\dfrac{2}{n-1}\sum_{k=1}^n\alpha_k
			\pa{\sum_{l\neq k}\alpha_l^2x_l^2}\right];\notag\\
			a_{ijk}&=\dfrac{2\alpha_j\alpha_k}{(n-2)r^2}
			\sq{(n-3)\alpha_i-\sum_{l\neq i,j,k}\alpha_l}.\notag
		\end{align}
		Note that $a_{ij}\in\RR$, $b_{ij}=b_{ij}(r,p)$ and
		$a_{ijk}=a_{ijk}(r)$, but $a_{ij}$, $b_{ij}$
		and $a_{ijk}x_jx_k$ are
		$O(1)$, for every $i,j,k$.
		It is important to note that there exist suitable choices for
		$\alpha_1,...,\alpha_n$ such that, for every $i\neq j\neq k$,
		$a_{ij}$ and $a_{ijk}$ nowhere vanish on $S$
		(observe that $a_{ij}$ and $a_{ijk}$ are scalars, while
		$b_{ij}$ is a polynomial of degree $2$ in the variables
		$x_1,...,x_n$ for every $i\neq j\neq k$). For instance, we
		may define
		\[
		\begin{cases}
			(\alpha_1,...,\alpha_n)=(2,2,1,1,...,1), \mbox{ if } n>4;\\
			(\alpha_1,\alpha_2,\alpha_3,\alpha_4)=
			\pa{1,\frac{5}{4},\frac{3}{2},2},
			\mbox{ if } n=4.
		\end{cases}
		\]
		A direct inspection of \eqref{coeffweyl} shows
		that, with this choice, $a_{ij},a_{ijk}\neq 0$.
		
		Note that, for $n=4$, $\alpha_i\neq\alpha_j$ if
		$i\neq j$. For $n>4$, observe that $a_{ij}$ and $a_{ijk}$
		can be seen as
		homogeneous polynomials in the $n$ variables
		$\alpha_1,...,\alpha_n$, therefore, in particular, they
		are smooth functions of these variables: hence, since we found
		a $n$-tuple $(\alpha_1,...,\alpha_n)$ such that
		$a_{ij},a_{ijk}\neq 0$, we know that there exist sufficiently
		small
		$\epsilon_1\neq...\neq\epsilon_n$, with $\epsilon_i>0$ for every
		$i$, such that $a_{ij},a_{ijk}\neq 0$ for
		\[
		(\alpha_1',...,\alpha_n'):=(2-\epsilon_1,2-\epsilon_2,
		1+\epsilon_3,1+\epsilon_4,...,1+\epsilon_n)
		\]
		and $\alpha_i'\neq\alpha_j'$ for $i\neq j$.
		Therefore, without loss of generality, we may assume
		that $\alpha_i\neq\alpha_j$ whenever $i\neq j$.
		
%
%
		
		Let us distinguish three cases.
		\begin{case}[$p=p_0$] By hypothesis, $\weyl_g$ vanishes
			at $p$ and, since $p_0=(0,...,0)$ in our local coordinates,
			by \eqref{weylremainder} we obtain
			\begin{align*}
				\tilde{W}_{ijij}&=\lambda^2a_{ij}(f')^2+O(r^2);\\
				\tilde{W}_{ijik}&=O(r^2);
			\end{align*}
			since $a_{ij}, f', \lambda\neq 0$, we have that
			\[
			\abs{\weyl_{\tilde{g}}}^2_{\tilde{g}}
			\geq 2\sum_{i<j}\tilde{W}_{ijij}^2=
			(\lambda f')^4\sum_{i<j}(a_{ij})^2>0.
			\]
		\end{case}
		\begin{case}[$p\in B_{r/2}\setminus \{p_0\}$] We want
			to show that the components of the Weyl tensor
			$\operatorname{W}_{\tilde{g}}$ cannot vanish
			simultaneously at $p$, if $r$ is sufficiently small, i.e.
			$r<\bar{r}=\bar{r}(p_0,||g||_{C^k})$, for $k\geq 3$.
			Since $p$ lies in
			the open ball of radius $r/2$ and centered in $p_0$, by
			Taylor's Theorem we have that
			\[
			\abs{\weyl_g}\leq C\cdot r + O(r^2).
			\]
			Let us suppose $\tilde{W}_{ijij}=\tilde{W}_{ijik}=0$
			for every $i\neq j\neq k$. By \eqref{weylremainder},
			we can write
			\begin{align*}
				a_{ij}(f')^2+b_{ij}f'f''+O(r)&=0;\\
				a_{ijk}x_jx_k+O(r)&=0.
			\end{align*}
			For a sufficiently small radius $r$,
			the previous equations imply
			\begin{subequations}
				\begin{empheq}[left=\empheqlbrace]{align}
					a_{ij}(f')^2+b_{ij}f'f''&=0; \label{wijij}\\
					a_{ijk}x_jx_k&=0. \label{wijik}
				\end{empheq}
			\end{subequations}
			Note that we obtained an overdetermined system in the variables
			$x_1,...x_n$: indeed, since $i\neq j\neq k$ and the coefficients
			$a_{ijk}$ are symmetric with respect to the indices $j$ and $k$,
			we have $n(n-1)/2$ independent
			equations of the form $\eqref{wijik}$ (observe that
			changing the index $i$ in \eqref{wijik} does not
			provide additional equations).
			Moreover, the polynomials $a_{ij}(f')^2+b_{ij}f'f''$ are
			symmetric with respect to $i$ and $j$ and a straightforward
			computation shows that
			\[
			\sum_{i\neq j}a_{ij}=\sum_{i\neq j}b_{ij}=0, \mbox{ for every }
			j
			\]
			(this can also be seen as a consequence of the fact that
			the Weyl tensor is traceless). Thus, we have
			\[
			\dfrac{n(n-1)}{2}-n=\dfrac{n(n-3)}{2}
			\]
			equations of the form \eqref{wijij}. Therefore, our system
			is made by
			\[
			\dfrac{n(n-3)}{2}+\dfrac{n(n-1)}{2}=n(n-2)
			\]
			independent equations, and $n(n-2)>n+1>n$ for every $n\geq 4$.
			
			Now, let us show that the system admits only the solution
			$x_1=\dots=x_n=0$, which will lead to a contradiction,
			since $p\neq p_0$.
			Since $a_{ijk}\neq 0$, we obtain that $x_jx_k=0$ for every
			$j\neq k$. This implies that at least $n-1$ coordinates
			of $p$ must be zero; since $p\neq p_0$,
			there is exactly one coordinate $x_i$ which is non-zero.
			
			Let us consider $j\neq t\neq s\neq i$ (note that
			this is possible since $n\geq 4$):
			by $\tilde{W}_{ijij}=\tilde{W}_{itit}=\tilde{W}_{isis}=0$ we obtain
			\begin{align*}
				0&=\dfrac{1}{n-2}\sq{(n-4)\alpha_i\alpha_j-
					(\alpha_i+\alpha_j)\sum_{k\neq i,j}\alpha_k+
					\dfrac{2}{n-1}\sum_{k<l}\alpha_k\alpha_l}(f')^2+\\
				&+\dfrac{2}{(n-2)r^2}\left[
				(n-4)\alpha_i^2\alpha_jx_i^2-
				\alpha_i^2x_i^2\sum_{k\neq i,j}\alpha_k
				+\dfrac{2}{n-1}\sum_{k=1}^n\alpha_k
				\pa{\sum_{l\neq k}\alpha_l^2x_l^2}\right]f'f'';\\
				0&=\dfrac{1}{n-2}\sq{(n-4)\alpha_i\alpha_t-
					(\alpha_i+\alpha_t)\sum_{k\neq i,t}\alpha_k+
					\dfrac{2}{n-1}\sum_{k<l}\alpha_k\alpha_l}(f')^2+\\
				&+\dfrac{2}{(n-2)r^2}\left[
				(n-4)\alpha_i^2\alpha_tx_i^2-
				\alpha_i^2x_i^2\sum_{k\neq i,t}\alpha_k
				+\dfrac{2}{n-1}\sum_{k=1}^n\alpha_k
				\pa{\sum_{l\neq k}\alpha_l^2x_l^2}\right]f'f''; \\
				0&=\dfrac{1}{n-2}\sq{(n-4)\alpha_i\alpha_s-
					(\alpha_i+\alpha_s)\sum_{k\neq i,s}\alpha_k+
					\dfrac{2}{n-1}\sum_{k<l}\alpha_k\alpha_l}(f')^2+\\
				&+\dfrac{2}{(n-2)r^2}\left[
				(n-4)\alpha_i^2\alpha_sx_i^2-
				\alpha_i^2x_i^2\sum_{k\neq i,s}\alpha_k
				+\dfrac{2}{n-1}\sum_{k=1}^n\alpha_k
				\pa{\sum_{l\neq k}\alpha_l^2x_l^2}\right]f'f'';
			\end{align*}
			subtracting the second and the third equations from the first,
			since $\alpha_j\neq\alpha_t\neq\alpha_s$ and
			$f',f''\neq 0$ on $S$, we get
			\begin{align*}
				0&=\sq{(n-3)\alpha_i-
					\sum_{k\neq i,j,t}\alpha_k}f'+
				\dfrac{2}{r^2}(n-3)\alpha_i^2x_i^2f'',\\
				0&=\sq{(n-3)\alpha_i-
					\sum_{k\neq i,j,s}\alpha_k}f'+
				\dfrac{2}{r^2}(n-3)\alpha_i^2x_i^2f''.
			\end{align*}
			It is immediate to observe that these two equations hold
			simultaneously if and only if
			\[
			\sum_{k\neq i,j,t}\alpha_k=\sum_{k\neq i,j,s}\alpha_k \quad
			\Lra \quad \alpha_s=\alpha_t,
			\]
			which is impossible.
			Thus, not all the components of $\weyl_{\tilde{g}}$
			vanish at $p$.
		\end{case}
		\begin{case}[$p\in S\setminus B_{\frac{r}{2}}$]
			Let us suppose again
			that $\tilde{W}_{ijij}=\tilde{W}_{ijik}=0$ for every
			$i\neq j\neq k$. As in Case 2, for a sufficiently small
			$r$, the first two equations
			in \eqref{weylremainder} imply
			\begin{subequations}
				\begin{empheq}[left=\empheqlbrace]{align}
					W_{ijij}&+\lambda^2(a_{ij}(f')^2+b_{ij}f'f'')=0;\\
					W_{ijik}&+\lambda^2a_{ijk}x_jx_kf'f''=0.
				\end{empheq}
			\end{subequations}
			If $W_{ijij}=W_{ijik}=0$ at $p$, we get a contradiction
			by the conclusions of Case 2. Thus, let us suppose that
			$\abs{\weyl_g}_g^2>0$ at $p$: for instance, let $W_{ijik}\neq 0$
			for some $i,j,k$. The equation $\tilde{W}_{ijik}=0$ allows us
			to compute $\lambda$:
			\[
			\lambda^2=-\dfrac{W_{ijik}}{a_{ijk}x_jx_k}.
			\]
			This equation holds for every point whose coordinates are solutions
			of the system above; however, $\lambda\in [1,2]$ appears as
			a free parameter in \eqref{phiweyl}, therefore it is sufficient
			to choose $\lambda_1\in [1,2]$ such that $\lambda_1^2\neq\lambda^2$
			and repeat the argument of the proof to obtain a contradiction.
			Thus, $W_{ijik}=0$. If, for instance, $\lambda_1$ is such that the
			equation
			\[
			W_{i'j'i'k'}+\lambda_1^2a_{i'j'k'}x_{j'}x_{k'}f'f''=0
			\]
			holds for some $i'\neq j'\neq k'$, it is sufficient to choose
			$\lambda_2\in [1,2]$ such that $\lambda_2^2\neq\lambda_1^2$ to
			get the same contradiction. Note that we can repeat the procedure
			for every equation of the system above.
			
			Therefore, possibly choosing $\lambda$ in \eqref{phiweyl} out
			of a finite set $\{\lambda_1,...,\lambda_k\}$, we can conclude
			that the system holds if and only if $W_{ijij}=W_{ijik}=0$ at $p$:
			however, by the argument of Case 2, this leads to a contradiction.
		\end{case}
	
		\vspace{2cm}
		
		\begin{center}
			\large
			\textbf{Step 2: iteration of the process}
		\end{center}
		
		\vspace{0.3cm}
		
		In the first step,
		we proved that the Weyl tensor $\weyl_{\tilde{g}}$ does not
		vanish on $S$. Now, let us call $g_0=g$, $\phi^0=\phi$,
		$S_0=S$, $r_0=r$, $\lambda_0=\lambda$ and
		$g_1=\tilde{g}$: given $p_0\in M$ such that
		$\abs{\weyl_{g_0}}_{g_0}(p_0)=0$, there exist a normal open
		neighborhood $U_0$ and $\phi^0\in\cinf$, defined as in
		\eqref{phiweyl} with $r_0$ and $\lambda_0$,
		such that $S_0=\interior{\supp\phi^0}\subset U_0$
		and $\weyl_{g_1}$ has non-vanishing square norm on $S_0$, where
		$g_1=g_0+d\phi^0\otimes d\phi^0$.
		Since $M$ is compact by hypothesis, the set
		\[
		Z:=\set{p\in M: \abs{\weyl_{g_0}}_{g_0}(p)=0}
		\]
		is compact: indeed, $Z$ is closed, since it is the zero locus
		of a continuous function on $M$.
		Therefore, there exists a finite open cover of $Z$
		of the form
		\[
		\bigcup_{i=1}^NV_i:=\bigcup_{i=1}^N(S_i\cap Z),
		\]
		where $S_i$ contains a point $p_i$ where $\weyl_{g_0}$ vanishes
		and it is the interior of the support of a smooth function
		$\phi^i$ defined as in \eqref{phiweyl}, with
		$r_i$ small enough and $\lambda_i$ such that Aubin's local deformation
		can be performed as before. Moreover, observe that, if
		$p_j\in Z$, then, by construction, $p_j\not\in V_k$
		if $j\neq k$; we also note that Aubin's deformation on $S_i$
		do not produce new zeroes of $\abs{\weyl_{g_0}}$ outside of
		$Z$, which means that, if $p'\not\in Z$ before
		the deformation, then $p'\not\in Z$ after deforming
		the metric as well.
		
		The first step of the proof was to show that, around $p_0$,
		the metric $g_0$ can be deformed in order to have $\weyl_{g_1}
		\not\equiv 0$ on $S_0$. Now, we perform the argument again:
		let $p_1$ such that $\weyl_{g_0}\equiv 0$ at $p_1$
		and let $V_1\ni p_1$, with deformation function $\phi^1$,
		which has $\lambda_1$ and $r_1$ in its definition (recall
		that $r_1$ is chosen small enough so that Aubin's method
		can be exploited).
		
		If $V_0\cap V_1=\emptyset$, we can apply the deformation in $S_1$ in order to conclude
		that $\weyl_{g_2}\not\equiv 0$ on $S_1$, where
		$g_2=g_0+d\phi^1\otimes d\phi^1$, and, hence, on $V_1$.
		Therefore, let us suppose that $V_0\cap V_1\neq\emptyset$:
		if we consider
		a point $p\in V_1\setminus V_0$, here $g_1=g_0$, hence
		we Aubin's argument on $S_1$ works as in the previous case.
		Let us suppose that
		there exists a point $q\in V_0\cap V_1$ such that
		$\weyl_{g_2}$ vanishes identically at $q$: in this case,
		we have
		\[
		g_2=g_1+d\phi^1\otimes d\phi^1.
		\]
		The expression for the components of $\weyl_{g_2}$ is given by
		\eqref{weylaubin}, where $g_{ij}=(g_1)_{ij}$ and both the
		covariant derivatives of $\phi=\phi_1$ and the curvature quantities
		are referred to the metric $g_1$.
		
		Let us choose the
		indices $i,j,k,t$ such that $W^1_{ijkt}\neq 0$ at $q$ (whose
		existence is guaranteed by the first deformation we performed).
		If we evaluate \eqref{weylaubin} at $q$, the left-hand side
		vanishes: hence, since $\alpha_1,...,\alpha_n$ and $r_1$ are fixed,
		if we multiply both sides by $w^2$ we obtain an equation of
		the form
		\begin{equation} \label{nonhomequ}
			0=W^1_{ijkt}+P_{ijkt}(\lambda_1),
		\end{equation}
		where $W^1_{ijkt}=W^1_{ijkt}(q)$ and
		$P_{ijkt}(\lambda)=\sum_{i=1}^MC_i(\lambda_1)^i$ is a non-trivial
		polynomial of degree $M$ in $\lambda_1$. Thus,
		\eqref{nonhomequ} is a non-homogeneous polynomial equation
		in $\lambda_1$ with real coefficients, which means that
		the set of its roots is
		\[
		L_1=\{(\lambda_1)_1,...,(\lambda_1)_K\}, \quad K\leq M.
		\]
		Note that, if $\lambda_1=(\lambda_1)_{K'}$,
		for some $1\leq K'\leq K$, since $\lambda_1$ is a real number,
		then every other point $q'$ such that $\abs{\weyl_{g_2}}_{g_2}(q')=0$
		must satisfy \eqref{nonhomequ} with $\lambda_1=(\lambda_1)_{K'}$.
		
		Since the set of values of $\lambda_1$ such that $\weyl_{g_2}$
		vanishes at $q$ is finite, it is sufficient to choose
		$\lambda_1=\bar{\lambda}_1$ in $[1,2]\setminus L_1$
		to get a contradiction: therefore, up to choose $\lambda_1$ outside
		of a finite set of values, we have that $\weyl_{g_2}$ does not
		vanish in $V_0\cap V_1$, which implies that
		\[
		\abs{\weyl_{g_2}}_{g_2}\neq 0 \mbox{ on } V_0\cup V_1.
		\]
		Since $\{V_0,...,V_N\}$ is a finite set, we have a finite number
		of non-empty intersections: hence, we can repeat the process finitely many
		times to conclude that there exists a metric $\tilde{g}$ such that
		$\weyl_{\tilde{g}}\not\equiv 0$ on $M$ and this ends the proof.
	\end{proof}
	\setcounter{case}{0}
	\begin{rem} \label{weyl1}
		If $\abs{\weyl_{\tilde{g}}}_{\tilde{g}}>0$ for every point of $M$,
		then, operating the conformal change
		\[
		\ol{g}:=\abs{\weyl_{\tilde{g}}}\tilde{g},
		\]
		we obtain that the metric $\ol{g}$ is such that its Weyl tensor
		$\weyl_{\ol{g}}$ satisfies
		\[
		\abs{\weyl_{\ol{g}}}_{\ol{g}}^2\equiv 1 \mbox{ on } M.
		\]
	\end{rem}
	
	\
	
	\section{Proof of Theorems \ref{t-sd} and \ref{t-mixed}}
	In this section we extend Aubin's result in dimension four to the
	self-dual and anti-self dual components of the Weyl tensor in order to
	prove Theorem \ref{t-sd}.
	\begin{proof}[Proof of Theorem \ref{t-sd}]
		First, note that, by Remark \ref{weyl1}, it is sufficient to show
		that there exists a Riemannian metric whose self-dual Weyl tensor
		nowhere vanishes on $M$.
		
		Similarly as we did in the proof of Theorem \ref{t-aub},
		let $g$ any Riemannian metric on $M$ and let again $p_0\in M$ be
		such that $\operatorname{W^+}(p_0)=0$. We choose an open ball
		$B_r$ centered at $p_0$ with normal coordinates
		$x_1,x_2,x_3,x_4$ such that $p_0=(0,0,0,0)$ and we define
		a function $\phi$ as in \eqref{phiweyl} in such a way
		that $B_{\frac{r}{2}}\subset\interior{\supp}\phi\subset B_r$. Let
		$S=\interior{\supp}\phi$ and
		$\tilde{g}$ be the metric defined in \eqref{aubindef}.
		
		By
		definition
		\[
		W_{ijkl}=W_{ijkl}^+ + W_{ijkl}^-;
		\]
		moreover, it is not hard to show that, for every $i,j,k,l=1,...,4$
		such that $i\neq j$ and $k\neq l$, there exist indices $k'$ and $l'$
		such that
		\[
		W_{ijkl}^{\pm}=\pm W_{ijk'l'}^{\pm}.
		\]
		The pair $(k',l')$ is uniquely determined by the action of
		the Hodge star operator $\star$: indeed, it is well-known that
		the terms in which $\weyl$ decomposes are given by
		\[
		W_{ijkl}^{\pm}=\dfrac{1}{2}\sq{W_{ijkl}\pm(\star W)_{ijkl}}
		\]
		(for a detailed discussion, see, for instance, \cite{besse,singthor}).
		This implies immediately that
		\[
		W_{ijkl}^{\pm}=\dfrac{1}{2}(W_{ijkl}\pm W_{ijk'l'}).
		\]
		Let us now focus on $\weyl_g^+$. By \eqref{weylremainder}
		and \eqref{coeffweyl},
		for $i\neq j$ one can easily obtain
		\begin{align} \label{wijijplus}
			\tilde{W}_{ijij}^+&=\dfrac{1}{2}
			(\tilde{W}_{ijij}+\tilde{W}_{iji'j'})=\\
			&=\dfrac{1}{2}[W_{ijij}+W_{iji'j'}+\lambda^2(
			a_{ij}(f')^2+b_{ij}f'f'')+O(r^2)]=\notag \\
			&=W_{ijij}^+ +\dfrac{\lambda^2}{2}(
			a_{ij}(f')^2+b_{ij}f'f'') + O(r^2) \notag
		\end{align}
		(note that $(i',j')=(k,l)$ are such that $i\neq j\neq k\neq l$).
		Analogously, for $i\neq j\neq k$, we obtain
		\begin{align} \label{wijikplus}
			\tilde{W}_{ijik}^+&=\dfrac{1}{2}
			(\tilde{W}_{ijik}+\tilde{W}_{iji'k'})=\\
			&=\dfrac{1}{2}[W_{ijik}\pm W_{jijl}+\lambda^2(
			a_{ijk}x_jx_k\pm a_{jil}x_ix_l)f'f''+O(r^2)]=\notag\\
			&=W_{ijik}^+ +\dfrac{\lambda^2}{2}(
			a_{ijk}x_jx_k\pm a_{jil}x_ix_l)f'f''+O(r^2). \notag
		\end{align}
		Here, $\pm$ appears in the equations since we may have
		$(i',k')=(l,j)$ or $(i',k')=(j,l)$.
		
		Now, we are ready to prove the statement. Let us choose
		\[
		(\alpha_1,\alpha_2,\alpha_3,\alpha_4)=
		\pa{1,\frac{5}{4},\frac{3}{2},2};
		\]
		thus, an easy computation shows that
		\begin{equation} \label{coeffweylplus}
			\begin{cases}
				a_{12}=\frac{5}{48}=a_{34}\\
				a_{13}=-\frac{1}{48}=a_{24}\\
				a_{14}=-\frac{1}{12}=a_{23}
			\end{cases}
			\mbox{ and }
			\begin{cases}
				a_{123}=-\frac{15}{8r^2}, \mbox{ } a_{214}=-\frac{1}{2r^2}\\
				a_{124}=-\frac{5}{4r^2}, \mbox{ } a_{213}=-\frac{9}{8r^2}\\
				a_{134}=-\frac{3}{4r^2}, \mbox{ } a_{312}=-\frac{5}{8r^2}\\
			\end{cases}.
		\end{equation}
		We recall that
		\[
		\sum_{i\neq j} a_{ij}=0 \mbox{ for every } j \mbox{ and }
		\sum_{i\neq j,k} a_{ijk}=0 \mbox{ for every } j\neq k.
		\]
		As before, we distinguish three cases.
		\begin{case}[$p=p_0$]
			As we did for Aubin's result, since $a_{ij}\neq 0$ for every
			$i\neq j$, by \eqref{wijijplus} and \eqref{coeffweyl} we have
			\[
			\abs{\operatorname{W^+}_{\tilde{g}}}^2_{\tilde{g}}
			\geq 2\sum_{i<j}(\tilde{W}_{ijij}^+)^2=
			(\lambda f')^4\sum_{i<j}(a_{ij})^2>0.
			\]
		\end{case}
		\begin{case}[$p\in B_{r/2}\setminus \{p_0\}$]
			We can apply again Taylor's Theorem to conclude that
			\[
			\abs{\weyl_g^+}\leq C\cdot r + o(r^2), \mbox{ as }
			r\to 0.
			\]
			Let us suppose $\tilde{W}_{ijij}^+=\tilde{W}_{ijik}^+=0$
			for every $i\neq j\neq k$.
			By \eqref{wijikplus}, letting $r\to 0$ we have
			\[
			a_{ijk}x_jx_k\pm a_{jil}x_ix_l=0.
			\]
			More explicitly, we obtain the system
			\[
			\begin{cases}
				a_{123}x_2x_3+a_{214}x_1x_4=0\\
				a_{124}x_2x_4-a_{213}x_1x_3=0\\
				a_{134}x_3x_4+a_{312}x_1x_2=0
			\end{cases};
			\]
			by \eqref{coeffweylplus}, the system becomes
			\[
			\begin{cases}
				4x_1x_4=-15x_2x_3\\
				9x_1x_3=10x_2x_4\\
				5x_1x_2=-6x_3x_4
			\end{cases}.
			\]
			If $x_i\neq 0$ for every $i=1,2,3,4$, a straightforward computation
			shows that the system does not admit any real solution: therefore,
			the components $\tilde{W}_{ijik}^+$ cannot simultaneously vanish.
			Thus, without loss of generality, we may suppose $x_4=0$. This implies
			immediately that two out of the three remaining variables must be zero.
			Let us suppose that $x_2=x_3=x_4=0$ and $x_1\neq 0$ (the other cases
			are analogous). By $\tilde{W}_{ijij}^+=0$, for a sufficiently
			small $r$,
			\eqref{wijijplus} implies that
			\[
			a_{ij}(f')^2+b_{ij}f'f''=0.
			\]
			However, since using \eqref{coeffweyl} and \eqref{coeffweylplus} one has
			\[
			a_{13}(f')^2+b_{13}f'f''=0
			\quad\Longrightarrow\quad
			x_1^2=\dfrac{r^2}{4}\cdot\dfrac{f'}{f''},
			\]
			we get a contradiction, since, by definition of $f$, the ratio
			$f'/f''$ is negative on $B_{\frac{r}{2}}$.
		\end{case}
		\begin{case}[$p\in S\setminus B_{\frac{r}{2}}$]
			As before, let us suppose that $\tilde{W}_{ijij}^+=
			\tilde{W}_{ijik}^+=0$ for every $i\neq j\neq k$. As
			$r\to 0$, by \eqref{wijijplus} and \eqref{wijikplus} we obtain
			the system
			\[
			\begin{cases}
				W_{ijij}^+ +\frac{\lambda^2}{2}(a_{ij}(f')^2+b_{ij}f'f'')=0\\
				W_{ijik}^+ +\frac{\lambda^2}{2}(a_{ijk}x_jx_k\pm
				a_{jil}x_ix_l)f'f''=0
			\end{cases}.
			\]
			As in the proof of Theorem \ref{t-aub}, if we
			suppose that $\weyl^+$ does not identically
			vanish at $p$, possibly choosing $\lambda$ outside of a
			finite set of values, we obtain a contradiction: therefore,
			$\weyl^+=0$ at $p$, which is impossible for the
			conclusions of Case 2.
		\end{case}
		Thus,
		\[
		\abs{\weyl_{\tilde{g}}^+}_{\tilde{g}}^2>0
		\]
		on S: since $M$ is compact, we can repeat the argument presented in
		Step 2 of the proof of Theorem \ref{t-aub}
		to prove the claim.
		
		Note that the proof is analogous if we consider
		$\weyl_{\tilde{g}}^-$.
\end{proof}

Now, we prove the general condition defined in Theorem \ref{t-mixed}
\begin{proof}[Proof of Theorem \ref{t-mixed}]
	First, note that, if $t=1$, there is nothing to show: indeed
	$\weyl=\operatorname{W^+}+\operatorname{W^-}$, therefore
	Aubin's Theorem guarantees that the claim is true. If $t=0$,
	we obtain Theorem \ref{t-sd}.
	
	Now, let us suppose $t=-1$. A straightforward computation shows that
	\begin{align*}
		W_{ijij}^+-W_{ijij}^-&=W_{iji'j'}\\
		W_{ijik}^+-W_{ijik}^-&=\pm W_{iji'k'}\\
		W_{ijkl}^+-W_{ijkl}^-&=\pm W_{ijij};
	\end{align*}
	hence, we can apply again Theorem \ref{t-aub} to show the claim.
	
	Therefore, let $t\neq -1,0,1$. We consider again the deformed
	metric $\tilde{g}_t$ defined by \eqref{aubindef}, with $\phi$
	as in \eqref{phiweyl}. It is easy to obtain the system
	\small
	\begin{subequations}
		\begin{empheq}[left=\empheqlbrace]{align}
			\tilde{W}_{ijij}^+ +t\tilde{W}_{ijij}^-&=
			W_{ijij}^+ +tW_{ijij}^-+\frac{\lambda^2}{2}(1+t)
			[a_{ij}(f')^2+b_{ij}f'f'']+O(r^2) \label{mixwijij}\\
			\tilde{W}_{ijik}^+ +t\tilde{W}_{ijik}^-&=
			W_{ijik}^+ +tW_{ijik}^-+\frac{\lambda^2}{2}[(1+t)
			a_{ijk}x_jx_k\pm (1-t)a_{jil}x_ix_l]f'f''+O(r^2)
			\label{mixwijik}\\
			\tilde{W}_{ijkl}^+ +t\tilde{W}_{ijkl}^-&=
			W_{ijkl}^+ +tW_{ijkl}^-\pm\frac{\lambda^2}{2}(1-t)
			[a_{ij}(f')^2+b_{ij}f'f'']+O(r^2) \label{mixwijkl}
		\end{empheq}
	\end{subequations}
	\normalsize
	where $i\neq j\neq k\neq l$. As we did for the proof of
	Aubin's Theorem, let $p_0\in M$ be a point such that
	$\weyl_g^++t\weyl_g^-\restrict{p_0}=0$ and let
	$B_r$ be an open ball of radius $r$ and centered in $p_0$; moreover, let
	us define normal coordinates $x_1,...x_4$ such that
	$p_0=(0,0,0,0)$ and let $p\in B_r$.
	We define $\phi$ and $S=\interior{\supp}\phi$ as usual;
	finally, we choose the
	coefficients $(\alpha_1,...,\alpha_4)$ such that $a_{ij},a_{ijk}\neq 0$
	for every $i,j,k$: note that the coefficients can be chosen in such a way
	that the numbers $a_{ijk}$ have the same sign. By \eqref{coeffweylplus},
	it is easy to see that $\ul{\alpha}=(1,5/4,3/2,2)$ is a suitable choice.
	\setcounter{case}{0}
	\begin{case}[$p=p_0$]
		As usual, since $a_{ij}\neq 0$, we have that
		\[
		\tilde{W}_{ijij}^++t\tilde{W}_{ijij}^-=
		\dfrac{\lambda^2}{2}(1+t)a_{ij}(f')^2\neq 0, \qquad
		\tilde{W}_{ijkl}^++t\tilde{W}_{ijkl}^-=
		\dfrac{\lambda^2}{2}(1-t)a_{ij}(f')^2\neq 0
		\]
		at $p_0$; therefore $\weyl_{\tilde{g}_t}^+ +
		t\weyl_{\tilde{g}_t}^-\not\equiv0$ at $p_0$.
	\end{case}
	\begin{case}[$p\in B_{r/2}\setminus\{p_0\}$]
		For a sufficiently small radius $r$, we again have that
		\[
		\abs{\weyl_g^++t\weyl_g^-}\leq C\dot r
		+o(r^2), \mbox{ as } r\to 0.
		\]
		Let us suppose that $\tilde{W}_{ijkl}^++t\tilde{W}_{ijkl}^-=0$
		at $p$: therefore,
		the subsystem consisting of the equations
		of the form \eqref{mixwijik} becomes
		\[
		\begin{cases}
			(1+t)a_{123}x_2x_3+(1-t)a_{214}x_1x_4&=0\\
			(1+t)a_{124}x_2x_4-(1-t)a_{213}x_1x_3&=0\\
			(1+t)a_{134}x_3x_4+(1-t)a_{312}x_1x_2&=0
		\end{cases}.
		\]
		Let us suppose that $x_1,...,x_4\neq 0$: hence, we have
		\[
		\dfrac{1-t}{1+t}=\dfrac{a_{124}}{a_{213}}\cdot\dfrac{x_2x_4}{x_1x_3}=
		-\dfrac{a_{123}}{a_{214}}\cdot\dfrac{x_2x_3}{x_1x_4}\Rightarrow
		\dfrac{a_{124}}{a_{213}}\cdot\dfrac{x_4^2}{x_3^2}=-
		\dfrac{a_{123}}{a_{214}},
		\]
		which is impossible, since, by hypothesis, the coefficients $a_{ijk}$
		all have the same sign. Thus, at least one coordinate $x_i$ must
		vanish and, by the system above, this implies that there is just
		one coordinate of $p$ different from zero. Without loss of
		generality, we may suppose that $x_1\neq 0$. However,
		by choosing the coefficients $\alpha_1,...\alpha_4$
		in such a way that $a_{ij}$ and the coefficient of
		$x_1^2$ in $b_{ij}$ have opposite signs for
		some $i\neq j$, we get a contradiction, since
		$(f')^2$ and $f'f''$ have opposite signs on $S$: for instance, if
		$\ul{\alpha}=(1,5/4,3/2,2)$, by \eqref{coeffweyl} we have
		\[
		a_{12}=\dfrac{5}{16} \quad \mbox{ and } \quad
		b_{12}=-\dfrac{1}{3r^2}x_1^2.
		\]
		Thus, the only solution of the system is $x_1=...=x_4=0$, which
		is impossible, since $p\neq p_0$: hence, we conclude that
		$\weyl_{\tilde{g}_t}^++t\weyl_{\tilde{g}_t}^-$
		does not identically vanish at $p$.
	\end{case}
	\begin{case}[$p\in S\setminus B_{r/2}$]
		If we suppose that
		$\weyl_{\tilde{g}_t}^++t\weyl_{\tilde{g}_t}^-$
		identically vanish at $p$, as $r\to 0$ the system consisting of
		the equations \eqref{mixwijij}, \eqref{mixwijik} and \eqref{mixwijkl}
		becomes
		\[
		\begin{cases}
			0&=
			W_{ijij}^+ +tW_{ijij}^-+\frac{\lambda^2}{2}(1+t)
			[a_{ij}(f')^2+b_{ij}f'f'']\\
			0&=
			W_{ijik}^+ +tW_{ijik}^-+\frac{\lambda^2}{2}[(1+t)
			a_{ijk}x_jx_k\pm (1-t)a_{jil}x_ix_l]f'f''\\
			0&=
			W_{ijkl}^+ +tW_{ijkl}^-\pm\frac{\lambda^2}{2}(1-t)
			[a_{ij}(f')^2+b_{ij}f'f'']
		\end{cases}.
		\]
		However, if we suppose that
		$\weyl_g^++t\weyl_g^-$ does not identically
		vanish at $p$, as we did in the proofs of Theorem \ref{t-aub} and
		Theorem \eqref{t-sd}, by possibly choosing $\lambda$
		out of a finite set of values, we get a contradiction. Therefore,
		$\weyl_g^++t\weyl_g^-$ must vanish at $p$,
		which is impossible.
	\end{case}
	By the hypothesis of compactness on $M$, the claim is proven.
\end{proof}

\

\section{Proof of Theorem \ref{t-cot}}
In this section we prove Theorem \ref{t-cot}. If we use again Aubin's deformation of $g$ as described in \eqref{aubindef},
we can write the components of the Cotton tensor with respect to the deformed
metric $\tilde{g}$ as
\begin{align} \label{cottonaubin}
	\tilde{C}_{ijk}&=C_{ijk}-\dfrac{1}{w}[(\phi_k^t\phi^s+\phi_k^s\phi^t)
	R_{itjs}-(\phi_j^t\phi^s+\phi_j^s\phi^t)R_{itks}]+\\
	&-\dfrac{\phi^p}{w}\phi_{ik}\left\{R_{jp}-\dfrac{1}{w}\sq{
		\phi^t\phi^s(R_{ptjs}+\phi_{jp}\phi_{ts}-\phi_{pt}\phi_{js})-
		(\Delta\phi)\phi_{jp}+\phi_{pt}\phi_j^t}\right\}\notag+\\
	&+\dfrac{\phi^p}{w}\phi_{ij}\left\{R_{kp}-\dfrac{1}{w}\sq{
		\phi^t\phi^s(R_{ptks}+\phi_{kp}\phi_{ts}-\phi_{pt}\phi_{ks})-
		(\Delta\phi)\phi_{kp}+\phi_{pt}\phi_k^t}\right\}\notag+\\
	&+\dfrac{1}{w}[(\Delta\phi)_k\phi_{ij}-(\Delta\phi)_j\phi_{ik}+
	(\Delta\phi)\phi^sR_{sijk}-\phi_i^t\phi^sR_{stjk}
	+\phi_k^t\phi_{itj}-\phi_j^t\phi_{itk}+
	\phi^t\phi^s(R_{itjs,k}-R_{itks,j})]+\notag\\
	&+\dfrac{2\phi^p}{w^2}[\phi^t\phi^s(\phi_{kp}R_{itjs}-\phi_{jp}R_{itks})
	+\phi_{jp}((\Delta\phi)\phi_{ik}-\phi_{it}\phi_k^t)
	-\phi_{kp}((\Delta\phi)\phi_{ij}-\phi_{it}\phi_j^t)]+\notag\\
	&-\dfrac{1}{w^2}\left\{\phi^p[\phi_k^s(\phi_{ij}\phi_{sp}-
	\phi_{is}\phi_{jp})-\phi_j^t(\phi_{ik}\phi_{pt}-\phi_{it}\phi_{kp})]
	\right\}+\notag\\
	&-\dfrac{1}{w^2}\left\{\phi^p[\phi_k^s(\phi_{ij}\phi_{ps}-
	\phi_{ip}\phi_{js})-\phi_j^t(\phi_{ik}\phi_{pt}-\phi_{ip}\phi_{kt})
	]\right\}\notag+\\
	&-\dfrac{1}{w^2}\left\{\phi^s\phi^t(\phi^r(R_{rijk}\phi_{ts}-
	R_{rsjk}\phi_{it})+\phi_{tsk}\phi_{ij}-\phi_{tsj}\phi_{ik}-
	\phi_{itk}\phi_{js}+\phi_{itj}\phi_{ks})\right\}+\notag\\
	&-\dfrac{4\phi^p}{w^3}\phi^t\phi^s[\phi_{kp}(\phi_{ij}\phi_{ts}-
	\phi_{it}\phi_{js})-\phi_{jp}(\phi_{ik}\phi_{ts}-\phi_{it}\phi_{ks})]+
	\notag\\
	&-\dfrac{1}{2w(n-1)}[\phi^p\phi^qR_{pq,k}+2R_{pq}\phi^p\phi_k^q+
	2(\Delta\phi)(\Delta\phi)_k-2\phi^{pq}\phi_{pqk}](g_{ij}+\phi_i\phi_j)+
	\notag\\
	&+\dfrac{1}{2w(n-1)}[\phi^p\phi^qR_{pq,j}+2R_{pq}\phi^p\phi_j^q+
	2(\Delta\phi)(\Delta\phi)_j-2\phi^{pq}\phi_{pqj}](g_{ik}+\phi_i\phi_k)+
	\notag\\
	&-\dfrac{1}{2w^2(n-1)}\left\{2\phi^p\phi_{pk}\sq{2R_{st}\phi^s\phi^t
		-(\Delta\phi)^2+\phi_{st}\phi^{st}+\dfrac{4}{w}((\Delta\phi)\phi^s
		\phi^t\phi_{st}-\phi^r\phi_{rs}\phi^{st}\phi_t)}+\right.\notag\\
	&+\left.(\Delta\phi)_k\phi^p\phi^q\phi_{pq}+(\Delta\phi)
	\phi^p\phi^q\phi_{pqk}+2(\Delta\phi)\phi^p\phi_k^q\phi_{pq}-
	2\phi^p\phi^q\phi_p^s\phi_{sqk}-2\phi^p\phi_{pq}\phi^{qs}\phi_{sk}
	\right\}(g_{ij}+\phi_i\phi_j)+\notag\\
	&+\dfrac{1}{2w^2(n-1)}\left\{2\phi^p\phi_{pj}\sq{2R_{st}\phi^s\phi^t
		-(\Delta\phi)^2+\phi_{st}\phi^{st}+\dfrac{4}{w}((\Delta\phi)\phi^s
		\phi^t\phi_{st}-\phi^r\phi_{rs}\phi^{st}\phi_t)}+\right.\notag\\
	&+\left.(\Delta\phi)_j\phi^p\phi^q\phi_{pq}+(\Delta\phi)
	\phi^p\phi^q\phi_{pqj}+2(\Delta\phi)\phi^p\phi_j^q\phi_{pq}-
	2\phi^p\phi^q\phi_p^s\phi_{sqj}-2\phi^p\phi_{pq}\phi^{qs}\phi_{sj}
	\right\}(g_{ik}+\phi_i\phi_k)+\notag\\
	&-\dfrac{2}{n-1}(S_k\phi_i\phi_j-S_j\phi_i\phi_k).\notag
\end{align}
\begin{proof}
	Let $g$ any Riemannian metric on $M$  and consider the deformed metric $\tilde{g}$ defined in \eqref{aubindef},
	where $\phi$ is chosen as in \eqref{phiweyl}, with
	$\alpha_1,...,\alpha_n\in[1,2]$ and such that the derivatives of $f$
	satisfies the following inequalities
	\[
	f'>0, \qquad f''<0, \qquad f'''>0 \qquad \mbox{ on } [0,1)
	\]
	(for instance, we can choose \eqref{rightfunction} with a sufficiently
	large $b$).
	Let us choose a point $p_0\in M$ where
	the Cotton tensor $\cott$ of $(M,g)$ vanishes and let us
	consider again an open ball $B_r$ with normal coordinates
	centered at $p_0$; we also define $\phi$ and $S=\interior{\supp}\phi$
	as usual.
	Note that, in addition to \eqref{derphiweyl}
	and \eqref{2derphiweyl}, for a sufficiently small $r$ we have
	\begin{equation} \label{3phiweyl}
		\phi_{ijk}=\dfrac{2\lambda}{r^2}\alpha_i\sq{(
			\alpha_jx_i\delta_{jk}+\alpha_jx_j\delta_{ik}+\alpha_kx_k\delta_{ij})f''
			+\dfrac{2\alpha_j\alpha_k}{r^2}x_ix_jx_kf'''}=O\pa{\dfrac{1}{r}}.
	\end{equation}
	By \eqref{2derphiweyl} and \eqref{3phiweyl}, we obtain
	\begin{align} \label{laplphi}
		\Delta\phi&=\lambda\pa{f'\sum_{p=1}^n\alpha_p + \dfrac{2}{r^2}f''\sum_{p=1}^n
			\alpha_p^2x_p^2}\\ \label{covlaplphi}
		(\Delta\phi)_k&=\dfrac{2\lambda}{r^2}\sq{\pa{2\alpha_k^2x_k+\alpha_kx_k
				\sum_{p=1}^n\alpha_p}f''+\dfrac{2\alpha_k}{r^2}f'''\pa{\sum_{p=1}^n
				\alpha_p^2x_p^2}x_k}
	\end{align}
	As we did for $\tilde{\weyl}$ in \eqref{apprweyl},
	for sufficiently small radii we can consider the principal part of
	the transformed Cotton tensor:
	\begin{align} \label{apprcotton}
		\tilde{C}_{ijk}&=C_{ijk}+(\Delta\phi)_k\phi_{ij}-
		(\Delta\phi)_j\phi_{ik}+\phi_{tk}\phi_{itj}-\phi_{tj}\phi_{itk}+\\
		&-\dfrac{1}{n-1}[((\Delta\phi)(\Delta\phi)_k-\phi_{pq}\phi_{pqk})
		g_{ij}-((\Delta\phi)(\Delta\phi)_j-\phi_{pq}\phi_{pqj})
		g_{ik}] + O(r)\notag,
	\end{align}
	where the expression $O(r)$ contains all the terms in
	\eqref{apprcotton} whose order is the same as $r$ or higher.
	By inserting \eqref{3phiweyl}, \eqref{laplphi} and \eqref{covlaplphi}
	into \eqref{apprcotton}, we obtain
	\begin{align} \label{cottremainder}
		\tilde{C}_{iji}&=C_{iji}+
		\lambda^2\set{a_{ij}f'f''+b_{ij}\sq{f'f'''+(f'')^2}}x_j+O(r^2)\\
		\tilde{C}_{ijk}&=C_{ijk}+\lambda^2a_{ijk}x_ix_jx_k[(f'')^2+f'f''']
		+O(r),
		\nonumber
	\end{align}
	where $i\neq j\neq k$ and
	\begin{align} \label{coeffcott}
		a_{ij}&=\dfrac{2\alpha_j}{r^2}\sq{-4\alpha_i\alpha_j-
			\alpha_i\sum_{k\neq i,j}\alpha_k+\dfrac{2}{n-1}
			\pa{\alpha_j\sum_{k\neq j}\alpha_k+\sum_{k<l}\alpha_k\alpha_l}};\\
		b_{ij}&=\dfrac{4\alpha_j}{r^4}\sq{-\alpha_i\pa{\alpha_i\alpha_jx_i^2+
				\sum_{k\neq i}\alpha_k^2x_k^2}+\dfrac{1}{n-1}\sum_k\alpha_k\pa{
				\sum_{l\neq k}\alpha_l^2x_l^2}}; \nonumber\\
		a_{ijk}&=\dfrac{4\alpha_i\alpha_j\alpha_k}{r^4}(\alpha_k-\alpha_j).
		\nonumber
	\end{align}
	Note that it is sufficient to choose $\alpha_1,...,\alpha_n$ such that
	$\alpha_i\neq\alpha_j$ for every $i\neq j$ to obtain $a_{ijk}\neq 0$
	for every $i\neq j\neq k$.
	
	It is immediate to observe that, by \eqref{cottremainder}, the
	deformed cotton tensor $\cott_{\tilde{g}}$ vanish at $p_0$.
	Thus, we want to show that $\cott_{\tilde{g}}$ does not
	identically vanish on $S\setminus\{p_0\}$: by the compactness of $M$,
	we can repeat the finiteness argument used to prove Theorem
	\ref{t-aub} in order to conclude that
	the Cotton tensor $\cott_{\tilde{g}}$ does not identically
	vanish on $M\setminus\{p_0=p_0^1,...,p_0^k\}=:M\setminus\{p_1,...,p_k\}$.
	
	Now, let $p\in S$ and let us consider $\cott_{\tilde{g}}$
	at $p$.
	\setcounter{case}{0}
	\begin{case}[$p\in B_{r/2}\setminus\{p_0\}$]
		As usual, we have that
		\[
		\abs{\cott_g}\leq D\cdot r + o(r^2), \mbox{ as } r\to 0;
		\]
		if we suppose that $\tilde{C}_{iji}=\tilde{C}_{ijk}=0$ for every
		$i\neq j\neq k$, we have that
		\[
		\begin{cases}
			a_{ij}f'f''+b_{ij}[f'f'''+(f'')^2]x_j&=0\\
			a_{ijk}x_ix_jx_k[(f'')^2+f'f''']&=0
		\end{cases}
		\]
		for a sufficiently small $r$.
		By the properties of $f$ and our choice of $\alpha_1,...,
		\alpha_n$, we have that $x_ix_jx_k=0$ for every $i\neq j\neq k$, which
		implies that at most two coordinates of $p$ are not zero.
		
		Therefore, let us suppose that $x_i,x_j\neq 0$. By hypothesis,
		$\tilde{C}_{iji}=\tilde{C}_{jij}=0$: hence,
		by \eqref{cottremainder} and \eqref{coeffcott} we obtain the following
		equations
		\begin{align*}
			0&=\sq{-4\alpha_i\alpha_j-
				\alpha_i\sum_{k\neq i,j}\alpha_k+\dfrac{2}{n-1}
				\pa{\alpha_j\sum_{k\neq j}\alpha_k+\sum_{k<l}\alpha_k\alpha_l}}
			f'f''+\\
			&+\dfrac{2}{r^2}\sq{-\alpha_i\pa{\alpha_i\alpha_jx_i^2+
					\alpha_j^2x_j^2}+\dfrac{1}{n-1}\sum_k\alpha_k\pa{
					\sum_{l\neq k}\alpha_l^2x_l^2}}\sq{(f'')^2+f'f'''};\\
			0&=\sq{-4\alpha_i\alpha_j-
				\alpha_j\sum_{k\neq i,j}\alpha_k+\dfrac{2}{n-1}
				\pa{\alpha_i\sum_{k\neq i}\alpha_k+\sum_{k<l}\alpha_k\alpha_l}}
			f'f''+\\
			&+\dfrac{2}{r^2}\sq{-\alpha_j\pa{\alpha_i\alpha_jx_j^2+
					\alpha_i^2x_i^2}+\dfrac{1}{n-1}\sum_k\alpha_k\pa{
					\sum_{l\neq k}\alpha_l^2x_l^2}}\sq{(f'')^2+f'f'''};
		\end{align*}
		subtracting the second equation from the first, it is easy to obtain
		\[
		(\alpha_j-\alpha_i)\sum_{k\neq i,j}\alpha_k+\dfrac{2}{n-1}
		\pa{\alpha_j\sum_{k\neq j}\alpha_k-\alpha_i\sum_{k\neq i}\alpha_k}=0
		\Leftrightarrow
		\dfrac{n-3}{n-1}(\alpha_j-\alpha_i)\sum_{k\neq i,j}\alpha_k=0,
		\]
		which is impossible, since $\alpha_i\neq\alpha_j$ by hypothesis.
		This implies that exactly one coordinate of $p$ is different from
		zero (say, $x_j$). Since $n\geq 4$, if $i\neq t\neq j$, by
		$\tilde{C}_{iji}=\tilde{C}_{tjt}=0$ we obtain
		\begin{align*}
			0&=\sq{-4\alpha_i\alpha_j-\alpha_i\sum_{k\neq i,j}\alpha_k
				+\dfrac{2}{n-1}\pa{\alpha_j\sum_{k\neq j}
					\alpha_k+\sum_{k<l}\alpha_k\alpha_l}}f'f''+\\
			&+\dfrac{2}{r^2}\alpha_j^2x_j^2\sq{-\alpha_i+
				\dfrac{1}{n-1}\sum_{k\neq j}\alpha_k}\sq{(f'')^2+f'f'''};\\
			0&=\sq{-4\alpha_t\alpha_j-\alpha_t\sum_{k\neq t,j}\alpha_k
				+\dfrac{2}{n-1}\pa{\alpha_j\sum_{k\neq j}
					\alpha_k+\sum_{k<l}\alpha_k\alpha_l}}f'f''+\\
			&+\dfrac{2}{r^2}\alpha_j^2x_j^2\sq{-\alpha_t+
				\dfrac{1}{n-1}\sum_{k\neq j}\alpha_k}\sq{(f'')^2+f'f'''}.
		\end{align*}
		It is not hard to see that, for a suitable choice of
		$\alpha_1\neq...\neq\alpha_n$,
		the coefficients of
		$[(f'')^2+f'f''']$ in the equations do not vanish: this allows us
		to compute $x_j^2$ as
		\[
		x_j^2=\dfrac{r^2\sq{4\alpha_i\alpha_j+\alpha_i\sum_{k\neq i,j}\alpha_k
				-\dfrac{2}{n-1}\pa{\alpha_j\sum_{k\neq j}
					\alpha_k+\sum_{k<l}\alpha_k\alpha_l}}f'f''}
		{2\alpha_j^2\sq{-\alpha_i+
				\dfrac{1}{n-1}\sum_{k\neq j}\alpha_k}\sq{(f'')^2+f'f'''}}.
		\]
		However, inserting this into the other equation, we obtain
		\begin{align*}
			&\sq{4\alpha_t\alpha_j+\alpha_t\sum_{k\neq t,j}\alpha_k
				-\dfrac{2}{n-1}\pa{\alpha_j\sum_{k\neq j}
					\alpha_k+\sum_{k<l}\alpha_k\alpha_l}}\sq{-\alpha_i+
				\dfrac{1}{n-1}\sum_{k\neq j}\alpha_k}=\\
			=&\sq{4\alpha_i\alpha_j+\alpha_i\sum_{k\neq i,j}\alpha_k
				-\dfrac{2}{n-1}\pa{\alpha_j\sum_{k\neq j}
					\alpha_k+\sum_{k<l}\alpha_k\alpha_l}}\sq{-\alpha_t+
				\dfrac{1}{n-1}\sum_{k\neq j}\alpha_k},
		\end{align*}
		which implies
		\begin{align*}
			&\dfrac{4}{n-1}(\alpha_t-\alpha_i)\sum_{k\neq j}\alpha_k+
			\alpha_i\alpha_t(\alpha_t-\alpha_i)+\\
			+&\dfrac{1}{n-1}(\alpha_t-\alpha_i)
			\pa{\sum_{k\neq i,j,t}\alpha_k}\pa{\sum_{l\neq j}\alpha_l}+
			\dfrac{2}{(n-1)^2}(\alpha_t-\alpha_i)\pa{\alpha_j\sum_{k\neq j}
				\alpha_k+\sum_{k<l}\alpha_k\alpha_l}=0
		\end{align*}
		and this is clearly impossible. Since $p\neq p_0$, we have that
		the Cotton tensor $\cott_{\tilde{g}}$ cannot identically
		vanish at $p$.
	\end{case}
	\begin{case}[$p\in S\setminus B_{r/2}$]
		As usual, let us suppose that $\cott_{\tilde{g}}$ identically
		vanishes at $p$. If $\cott$ does not vanish at $p$, we
		can exploit the argument of Theorem \ref{t-aub} to conclude that,
		if we possibly choose $\lambda$ out of a finite set of values,
		this is impossible. Therefore, $\cott\equiv 0$ at $p$,
		which is a contradiction, by the proof of Case 1; hence,
		$\cott_{\tilde{g}}$ does not vanish at $p$.
	\end{case}
	The hypothesis of compactness on $M$ proves the claim.
\end{proof}


\section{Proof of Theorem \ref{t-bac}}

In this section, we focus on four-dimensional manifolds and we prove Theorem \ref{t-bac}. If $n=4$, the Bach tensor acquires two additional properties: it is
conformally invariant and divergence-free (see \cite{cmbook},
Section 1.4 and Section 2.2.2).

\begin{proof}
	As we did in the proof of Theorem \ref{t-aub}, let $g$ any Riemannian metric on $M$  and let $p_0\in M$ such that
	$\bach_g$ vanishes and let $B_r$ an open ball
	of radius $r$ and centered in $p_0$.
	Let us choose normal coordinates $x_1,...,x_4$ such that $p_0=(0,0,0,0)$
	and let us define the function $\phi$ as in \eqref{phiweyl} and
	$S=\interior{\supp}\phi$ as usual, with
	$f$ defined as in \eqref{rightfunction}.
	We know that $f\in C^{\infty}({[0,+\infty)})$: therefore,
	$\phi\in C^{\infty}(M)$ and it vanishes outside $S$. Moreover,
	for a sufficiently large $b$, the function
	$f$ satisfies the following inequalities
	\[
	f'>0, \quad f''<0, \quad f'''>0, \quad f^{IV}<0
	\quad\mbox{ on } [0,1).
	\]
	By \eqref{3phiweyl} and \eqref{laplphi}, we obtain the following
	additional expressions:
	\begin{align} \label{4phiweyl}
		\phi_{ijkt}&=\dfrac{2}{r^2}\lambda\alpha_i\left\{\dfrac{4}{r^4}
		\alpha_j\alpha_k\alpha_tx_ix_jx_kx_tf^{IV}+
		(\alpha_k\delta_{kt}\delta_{ij}+\alpha_j\delta_{jt}\delta_{ik}
		+\alpha_j\delta_{it}\delta_{jk})f''\right.+\\
		&+\left.\dfrac{2}{r^2}\sq{\alpha_j\alpha_k(\delta_{it}x_jx_k+
			\delta_{jt}x_ix_k+\delta_{kt}x_ix_j)
			+\alpha_tx_t(\delta_{ij}\alpha_kx_k+\delta_{ik}\alpha_jx_j
			+\delta_{jk}\alpha_jx_i)}f'''\right\}.
		\notag
	\end{align}
	\begin{align} \label{hesslaplphi}
		(\Delta\phi)_{jk}&=\dfrac{2\lambda\alpha_j}{r^2}\left\{\pa{2\alpha_j+
			\sum_p\alpha_p}f''\delta_{jk}+\right.\\
		&+\left.\dfrac{2}{r^2}\sq{\alpha_k\pa{
				2\alpha_j+\sum_p\alpha_p}x_jx_k+2\alpha_k^2x_jx_k+\sum_p\alpha_p^2x_p^2
			\delta_{jk}}f'''+\dfrac{4\alpha_k}{r^4}\pa{\sum_p\alpha_p^2x_p^2}
		x_jx_kf^{IV}\right\}. \notag
	\end{align}
	\begin{align} \label{bilaplphi}
		(\Delta\phi)_{kk}&=\dfrac{2\lambda}{r^2}\left[\pa{2\sum_p\alpha_p^2+
			\pa{\sum_q\alpha_q}^2}f''+\dfrac{4}{r^2}\pa{2\sum_p\alpha_p^3x_p^2+
			\sum_p\alpha_p\sum_q\alpha_q^2x_q^2}f'''+\right.\\
		&+\left.\dfrac{4}{r^4}\pa{\sum_p\alpha_p^2x_p^2}^2f^{IV}\right].
		\notag
	\end{align}
	Note that
	\[
	\phi_{ijkt}=O\pa{\dfrac{1}{r^2}}
	\]
	as $r\to 0$.
	We consider the principal part of
	the transformed Bach tensor:
	by \eqref{cottonaubin}, \eqref{apprcotton} and
	the definition of the Bach tensor, we obtain
	\begin{align} \label{apprbach}
		\tilde{B}_{ij}=B_{ij}&+
		(\Delta\phi)_{kk}\phi_{ij}+(\Delta\phi)_k\phi_{ijk}-
		(\Delta\phi)_{jk}\phi_{ik}-(\Delta\phi)_j\phi_{ikk}+\\
		&+\phi_{tkk}\phi_{itj}+\phi_{tk}\phi_{itjk}-
		\phi_{tjk}\phi_{itk}-\phi_{tj}\phi_{itkk}+\notag\\
		&-\dfrac{1}{n-1}\left[((\Delta\phi)_k(\Delta\phi)_k+(\Delta\phi)
		(\Delta\phi)_{kk}-\phi_{pqk}\phi_{pqk}-\phi_{pq}\phi_{pqkk})\delta_{ij}
		+\right.\notag\\
		&\left.-((\Delta\phi)_i(\Delta\phi)_j+(\Delta\phi)(\Delta\phi)_{ji}
		-\phi_{pqi}\phi_{pqj}-\phi_{pq}\phi_{pqji})\right]+O(1)\notag,
	\end{align}
	where $O(1)$ is the usual ``remainder'' term. Note that,
	as $r\to 0$, the terms given by $\tilde{R}_{kl}\tilde{W}_{ijkl}$
	in the definition of the Bach tensor \eqref{bach}
	do not appear in \eqref{apprbach}, since their order is lower than
	the order of $\tilde{C}_{ijk,k}$; however, as we did for the Cotton
	tensor, we make explicit the coefficients of $\bach_g$, since
	they do not depend on $f$ (and, therefore, they do not \emph{a priori}
	vanish as the argument of $f$ goes to $1$).
	
	Inserting \eqref{2derphiweyl}, \eqref{laplphi}, \eqref{3phiweyl},
	\eqref{4phiweyl} and \eqref{hesslaplphi} into \eqref{apprbach},
	for a sufficiently small radius $r$ we obtain
	the following expression for the Bach tensor:
	\footnotesize
	\begin{align} \label{apprbach4}
		\tilde{B}_{ij}=B_{ij}&+\dfrac{2\lambda^2}{3r^2}\set{\alpha_i
			\sq{8\sum_p\alpha_p^2+4\sum_q\alpha_q\pa{\sum_t\alpha_t-\alpha_i}
				-8\alpha_i^2}-\sum_p\alpha_p\sq{\sum_q\alpha_q^2+
				\pa{\sum_t\alpha_t}^2}+2\sum_p\alpha_p^3}f'f''\delta_{ij}+\\
		&+\dfrac{4\lambda^2}{3r^4}\left\{4\sum_p\alpha_p^4x_p^2+
		\pa{14\alpha_i-3\sum_p\alpha_p}
		\sum_q\alpha_q^3x_q^2+\right.\notag\\
		&+\left.\sum_p\alpha_p^2x_p^2
		\sq{\alpha_i\pa{7\sum_q\alpha_q-6\alpha_i}+\sum_t\alpha_t^2-
			2\pa{\sum_r\alpha_r}^2}\right\}\sq{f'f'''+\pa{f''}^2}\delta_{ij}+\notag\\
		&+\dfrac{8\lambda^2}{3r^6}\sum_p\alpha_p^2x_p^2
		\sq{\sum_q\alpha_q^3x_q^2+\sum_q\alpha_q^2x_q^2\pa{3\alpha_i-
				\sum_t\alpha_t}}\pa{f'f^{IV}+3f''f'''}\delta_{ij}\notag+\\
		&+\dfrac{4\lambda^2\alpha_i\alpha_j}{3r^4}x_ix_j\sq{2\sum_p\alpha_p^2
			+\pa{\sum_q\alpha_q}^2-2\pa{\alpha_i^2+\alpha_j^2+6\alpha_i\alpha_j}
			-\pa{\alpha_i+\alpha_j}\sum_t\alpha_t}\sq{f'f'''+\pa{f''}^2}+\notag\\
		&+\dfrac{8\lambda^2\alpha_i\alpha_j}{3r^6}x_ix_j\sq{
			2\sum_p\alpha_p^3x_p^2-\pa{3\alpha_i+3\alpha_j-\sum_q\alpha_q}
			\sum_t\alpha_t^2x_t^2}\pa{f'f^{IV}+3f''f'''}+O(1)\notag.
	\end{align}
	\normalsize
	Let
	\[
	A:=f'f'', \qquad B:=f'f'''+(f'')^2, \qquad C:=f'f^{IV}+3f''f'''
	\]
	and let us choose $(\alpha_1,\alpha_2,\alpha_3,\alpha_4)=
	(1,\frac{5}{4},\frac{3}{2},2)$. For $i\neq j$, we obtain the following
	equations
	\begin{align} \label{bachsyst1}
		\tilde{B}_{12}&=B_{12}+\dfrac{5\lambda^2}{3r^4}
		\sq{\dfrac{2}{r^2}\pa{x_1^2+\dfrac{75}{32}x_2^2+
				\dfrac{9}{2}x_3^2+12x_4^2}C+\dfrac{141}{8}B}x_1x_2\\
		\tilde{B}_{13}&=B_{13}+\dfrac{2\lambda^2}{r^4}
		\sq{\dfrac{2}{r^2}\pa{\dfrac{1}{4}x_1^2+\dfrac{75}{64}x_2^2+
				\dfrac{45}{16}x_3^2+9x_4^2}C+\dfrac{189}{16}B}x_1x_3\notag\\
		\tilde{B}_{14}&=B_{14}+\dfrac{8\lambda^2}{3r^4}
		\sq{-\dfrac{2}{r^2}\pa{\dfrac{5}{4}x_1^2+\dfrac{75}{64}x_2^2+
				\dfrac{9}{16}x_3^2-3x_4^2}C-\dfrac{9}{16}B}x_1x_4\notag\\
		\tilde{B}_{23}&=B_{23}+\dfrac{5\lambda^2}{2r^4}
		\sq{\dfrac{2}{r^2}\pa{-\dfrac{1}{2}x_1^2+
				\dfrac{9}{8}x_3^2+6x_4^2}C+\dfrac{19}{4}B}x_2x_3\notag\\
		\tilde{B}_{24}&=B_{24}+\dfrac{10\lambda^2}{3r^4}
		\sq{-\dfrac{2}{r^2}\pa{x_1^2+\dfrac{75}{32}x_2^2+
				\dfrac{9}{4}x_3^2}C-\dfrac{73}{8}B}x_2x_4\notag\\
		\tilde{B}_{34}&=B_{34}+\dfrac{4\lambda^2}{r^4}
		\sq{-\dfrac{2}{r^2}\pa{\dfrac{11}{4}x_1^2+\dfrac{225}{64}x_2^2+
				\dfrac{63}{16}x_3^2+3x_4^2}C-\dfrac{287}{16}B}x_3x_4; \notag
	\end{align}
	for $i=j$, we have the additional expressions
	\begin{align} \label{bachsyst2}
		\tilde{B}_{11}=B_{11}&-\dfrac{323\lambda^2}{12r^2}A+
		\dfrac{\lambda^2}{3r^4}\pa{\dfrac{7}{2}x_1^2-\dfrac{4175}{32}x_2^2
			-\dfrac{2727}{16}x_3^2-217x_4^2}B+\\
		&+\dfrac{8\lambda^2}{3r^6}\left[
		\pa{x_1^2+\dfrac{25}{16}x_2^2+\dfrac{9}{4}x_3^2+4x_4^2}
		\pa{-\dfrac{7}{4}x_1^2-\dfrac{75}{32}x_2^2-\dfrac{45}{16}x_3^2
			-3x_4^2}+\right.\notag\\
		&+\left.x_1^2\pa{\dfrac{7}{4}x_1^2+\dfrac{225}{64}x_2^2+
			\dfrac{99}{16}x_3^2+15x_4^2}\right]C \notag\\
		\tilde{B}_{22}=B_{22}&-\dfrac{41\lambda^2}{6r^2}A+
		\dfrac{\lambda^2}{r^4}\pa{-\dfrac{97}{6}x_1^2+\dfrac{75}{24}x_2^2
			-21x_3^2+\dfrac{2}{3}x_4^2}B+\notag\\
		&+\dfrac{\lambda^2}{3r^6}\left[
		\pa{8x_1^2+\dfrac{25}{2}x_2^2+18x_3^2+32x_4^2}
		\pa{-x_1^2-\dfrac{75}{64}x_2^2-\dfrac{9}{8}x_3^2}+\right.\notag\\
		&+\left.x_2^2\pa{\dfrac{25}{8}x_1^2+\dfrac{1875}{128}x_2^2+
			\dfrac{1125}{32}x_3^2+\dfrac{225}{2}x_4^2}\right]C \notag\\
		\tilde{B}_{33}=B_{33}&+\dfrac{53\lambda^2}{6r^2}A+
		\dfrac{\lambda^2}{r^4}\pa{-\dfrac{43}{12}x_1^2+\dfrac{25}{24}x_2^2
			+\dfrac{39}{8}x_3^2+\dfrac{209}{3}x_4^2}B+\notag\\
		&+\dfrac{2\lambda^2}{r^6}\left[
		\pa{\dfrac{4}{3}x_1^2+\dfrac{25}{12}x_2^2+3x_3^2+\dfrac{16}{3}x_4^2}
		\pa{-\dfrac{1}{4}x_1^2+\dfrac{9}{16}x_3^2
			+3x_4^2}+\right.\notag\\
		&+\left.x_3^2\pa{-\dfrac{15}{4}x_1^2-\dfrac{225}{64}x_2^2-
			\dfrac{27}{16}x_3^2+9x_4^2}\right]C \notag\\
		\tilde{B}_{44}=B_{44}&+\dfrac{299\lambda^2}{12r^2}A+
		\dfrac{\lambda^2}{r^4}\pa{\dfrac{223}{12}x_1^2+\dfrac{3775}{96}x_2^2
			+\dfrac{1167}{16}x_3^2+2x_4^2}B+\notag\\
		&+\dfrac{8\lambda^2}{3r^6}\left[
		\pa{x_1^2+\dfrac{25}{16}x_2^2+\dfrac{9}{4}x_3^2+4x_4^2}
		\pa{\dfrac{5}{4}x_1^2+\dfrac{75}{32}x_2^2+\dfrac{63}{16}x_3^2
			+9x_4^2}+\right.\notag\\
		&-\left.x_4^2\pa{17x_1^2+\dfrac{375}{16}x_2^2+
			\dfrac{117}{4}x_3^2+36x_4^2}\right]C \notag
	\end{align}
	Of course, the equations in \eqref{bachsyst2} cannot be all independent,
	since the Bach tensor is trace-free.
	
	As we did for Theorem \ref{t-aub}, we consider three cases.
	\setcounter{case}{0}
	\begin{case}[$p=p_0$]
		In our local coordinates, $p_0=(0,0,0,0)$; therefore, since
		$B_g\not\equiv0$ in $p_0$ and $A<0$ on $B_r$, by \eqref{bachsyst1} and
		\eqref{bachsyst2} we obtain
		\[
		\abs{\bach_{\tilde{g}}}_{\tilde{g}}^2=
		2\sum_{i=1}^4\tilde{B}_{ii}^2=CA^2>0,
		\]
		where $C=\frac{105845\lambda^4}{36r^4}$.
	\end{case}
	\begin{case}[$p\in B_{r/2}\setminus\{p_0\}$]
		In this case, we have again that
		\[
		|B_g|\leq C\cdot r + o(r^2), \mbox{ as } r\to 0.
		\]
		Thus, we may consider just the principal parts in the system defined by
		\eqref{bachsyst1} and \eqref{bachsyst2}.
		
		Let us suppose that $\tilde{B}_{ij}=0$ for every $i,j$ at
		$p=(x_1,x_2,x_3,x_4)$. We want to show that the only solution
		of the system is given by $x_i=0$ for every $i$, which leads to
		a contradiction for the previous argument.
		
		If we suppose that $x_i\neq 0$ for every $i$, we have that, for instance,
		\[
		B=-\dfrac{16}{141r^2}\pa{x_1^2+\dfrac{75}{32}x_2^2+\dfrac{9}{2}x_3^2+
			12x_4^2}C
		\]
		by the first equation in \eqref{bachsyst1}. Since $B>0$ and $C<0$ in
		$B_r$ and $x_1,...,x_4\neq 0$,
		inserting this into the other equations in \eqref{bachsyst1},
		we obtain a system of
		five equations in the variables $x_1,...,x_4$: a straightforward computation
		shows that this system admits only the trivial solution and,
		therefore, one of the variables $x_1,...,x_4$ must be zero.
		
		Now, let us suppose that $x_i\neq 0$ for at least two indices $i$. If
		$x_i\neq 0$ for one index $i$, by \eqref{bachsyst1} and \eqref{bachsyst2}
		we obtain a system of $5$ independent
		equations in $x_j,x_k,x_l$, where $j,k,l\neq i$: by an analogous argument,
		we can show that the system admits no solutions, which implies that
		at least two variables $x_i$ and $x_j$ must be zero.
		In this case, expressing $B$ in
		terms of $C$ as before, by \eqref{bachsyst2} we can express $A$ in
		terms of $C$ as well and, therefore, obtain two independent equations
		in $x_k,x_l$; however, by our choice of the coefficients $\alpha_1,...,
		\alpha_4$, the system is once again inconsistent.
		
		Therefore, as
		in the proof of Theorem \ref{t-aub}, we obtain that
		exactly one variable $x_i$ is different from zero. Let us suppose that,
		for instance, $x_1\neq 0$. By \eqref{bachsyst2}, we have that
		\[
		\tilde{B}_{11}=-\dfrac{323\lambda^2}{12r^2}A+\dfrac{7\lambda^2}{6r^4}x_1^2B
		>0,
		\]
		since $A<0$ and $B>0$ on $S$. Thus, the system admits no solution. The
		other cases can be shown in an analogous way. Hence, we conclude that
		$\abs{\bach_{\tilde{g}}}_{\tilde{g}}^2$ must be
		strictly positive at $p$.
		
		We also point out that the same system was solved \emph{via} technical
		computing through Wolfram Mathematica (see Appendix \ref{solbach} for the code). Also note
		that the system in the Appendix is more general than the one we are
		considering in this proof: indeed, we showed that the system
		\eqref{bachsyst1}+\eqref{bachsyst2}, with $B_{ij}=0$, would admit
		no real solutions even if $A$, $B$ and $C$ were free real parameters satisfying
		$A,B,C\neq 0$.
	\end{case}
	\begin{case}[$p\in S\setminus B_{r/2}$]
		In this case, we need to consider the components of the Bach tensor
		$\bach_g$ in \eqref{bachsyst1} and \eqref{bachsyst2}.
		
		If $\bach_g\equiv0$ at $p$, we can immediately conclude that
		$\abs{\bach_{\tilde{g}}}_{\tilde{g}}^2>0$ at $p$,
		by the proof of Case 2.
		Thus, let us suppose that $\tilde{B}_{ij}=0$ at $p$ for every $i,j$ and
		that $\abs{\bach_g}_g^2>0$ at $p$. In particular, we
		may suppose that $B_{12}\neq 0$ at $p$. By the first equation
		in \eqref{bachsyst1}, we obtain that
		\[
		\lambda^2=-\dfrac{3r^4B_{12}}{5\sq{\dfrac{2}{r^2}\pa{x_1^2+
					\dfrac{75}{32}x_2^2+\dfrac{9}{2}x_3^2+12x_4^2}C+\dfrac{141}{8}B}x_1x_2}
		\]
		at $p$. However, we may choose $\lambda_1\in\mathbb{R}$ such that
		$\lambda_1^2\neq\lambda^2$ in \eqref{phiweyl}, since $\lambda$ is
		a free parameter: if we repeat the argument of the proof with
		$\lambda_1$ instead of $\lambda$, we get a contradiction and, therefore,
		we conclude that $B_{12}=0$ at $p$.
		
		Now, if $B_{13}\neq 0$ at $p$, the second equation in \eqref{bachsyst1}
		implies that
		\[
		\lambda_1^2=-\dfrac{r^4B_{13}}{2\sq{\dfrac{2}{r^2}\pa{\dfrac{1}{4}x_1^2+
					\dfrac{75}{64}x_2^2+\dfrac{45}{16}x_3^2+9x_4^2}C
				+\dfrac{189}{16}B}x_1x_3};
		\]
		again, possibly choosing $\lambda_2$ such that
		$\lambda_2^2\neq\lambda_1^2$, we obtain that $B_{13}=0$ at $p$.
		Iterating this argument for every component $B_{ij}$, we conclude that,
		possibly choosing $\ol{\lambda}$ outside a finite set $\{
		\lambda,...,\lambda_k\}$, the components $B_{ij}$ must all vanish
		at $p$. Therefore, we repeat the argument of Case 2 to conclude that
		\[
		\abs{\bach_{\tilde{g}}}_{\tilde{g}}^2>0 \mbox{ at } p.
		\]
	\end{case}
	Now, as in Step 2 of
	the proof of Theorem \ref{t-aub}, since $M$ is compact, we
	can deform the metric $g$ on a finite cover of $M$: using the argument of
	Remark \ref{weyl1}, the claim is proven.
\end{proof}
\begin{rem}
	Observe that, even if we did not obtain the full expression of
	the transformed Bach tensor, it can be easily seen that, once we
	fix a point $p\in S$, the quantity $\tilde{B}_{ij}-B_{ij}$, up to
	multiplying for a suitable power of $w$, is
	indeed a polynomial of finite degree in $\lambda$.
\end{rem}
\begin{rem}
	As we recalled in the Introduction, when $\operatorname{dim}M=4$,
	Bach-flatness is a necessary condition for $(M,g)$ to be an Einstein
	manifold; therefore, an immediate consequence of Theorem \ref{t-bac} is
	that, given a smooth manifold $M$ of dimension four, one can always
	choose a conformal class $[g]$ of Riemannian metrics which contains
	no Einstein metrics. In fact, we can say more:
	since we found a quadruple $\alpha_1,...,\alpha_4$ such that
	the system of equations \eqref{bachsyst1}+\eqref{bachsyst2}
	admits no solutions, there exists an open neighborhood $U_{\ul{\alpha}}$ of
	$\ul{\alpha}=(\alpha_1,...,\alpha_4)$
	in $Q:=[1,2]\times[1,2]\times[1,2]\times[1,2]$ such that, for every
	$\ul{\alpha}'\in U_{\ul{\alpha}}$, the system admits no solutions on $M$.
	Therefore, there exist infinitely-many conformal classes of
	Riemannian metrics on $M$ which contain no Einstein metrics.
	
	Although we did not prove it in this paper, we expect that,
	given any Riemannian metric $g$ on $M$, the
	subset
	\[
	Q':=\left\{\alpha\in Q: \abs{\bach_{g_\alpha}}_{g_\alpha}^2\equiv 1,
	\mbox{ where } g_\alpha=g+d\phi_{\alpha}\otimes d\phi_\alpha \mbox{ and
	} \phi_{\alpha} \mbox{ is defined as in \eqref{phiweyl} } \right\}
	\]
	is such that $Q\setminus Q'$ has Lebesgue measure zero in $Q$.
\end{rem}

\appendix
\section{Solutions of the systems \eqref{bachsyst1} and \eqref{bachsyst2}
	in the homogeneous case} \label{solbach}
\includegraphics[width=15cm, height=17cm]{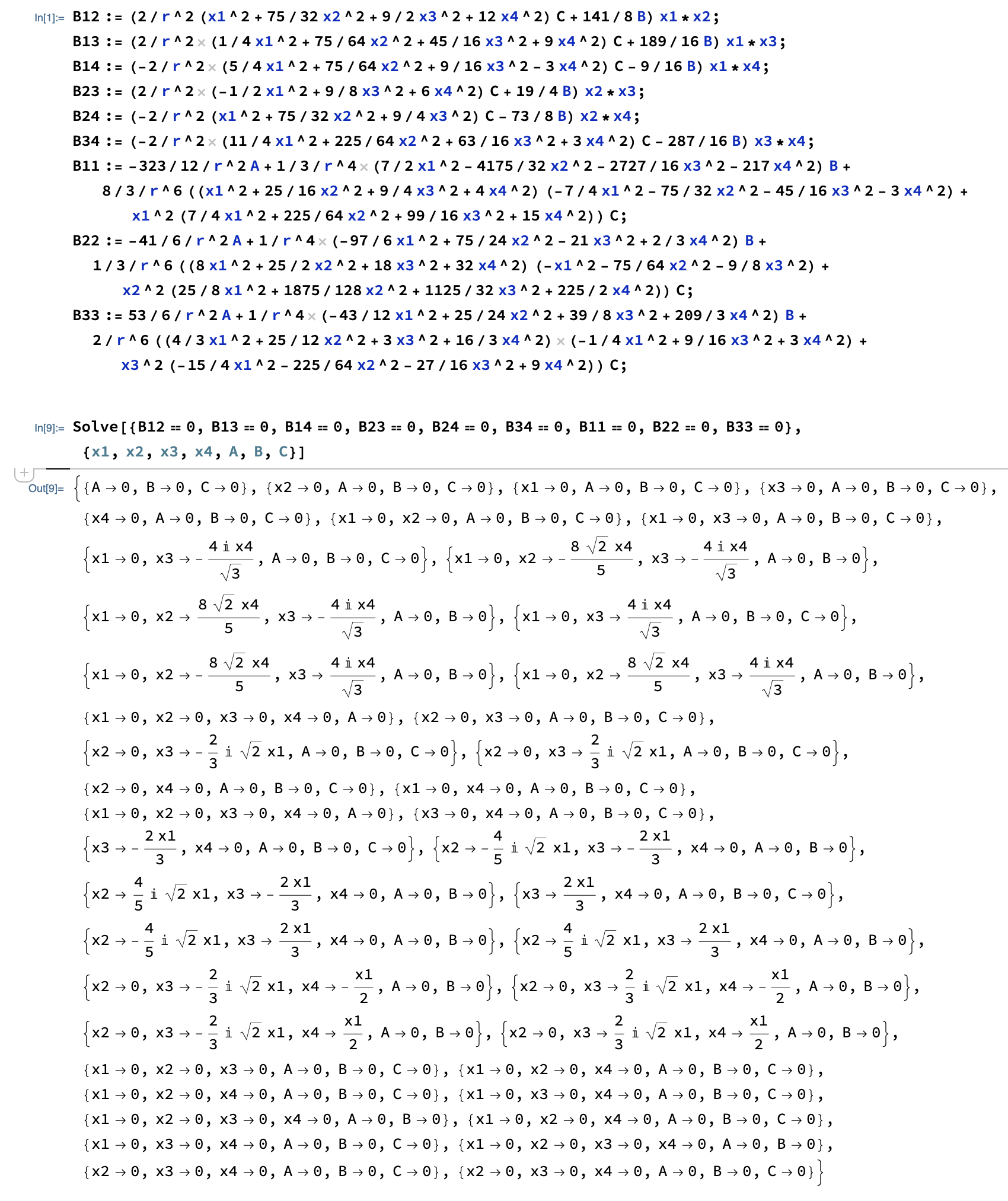}
\bibliographystyle{abbrv}
\bibliography{Aubin_biblio}
\end{document}